
\documentclass[11pt,reqno]{amsart}


\usepackage[utf8]{inputenc}
\usepackage[T1]{fontenc}
\usepackage[english]{babel}
\usepackage{csquotes}


\usepackage[margin=3cm]{geometry}


\usepackage{xcolor}
\usepackage[colorlinks=true, linkcolor=black, citecolor=black, urlcolor=black]{hyperref} 
\usepackage[nameinlink]{cleveref}


\setlength{\parindent}{0cm}
\usepackage[onehalfspacing]{setspace}

\numberwithin{equation}{section} 


\usepackage{paralist}
\setlength{\plitemsep}{50em}

\usepackage{enumitem}
\setlist[enumerate,1]{label = (\alph*),leftmargin=*, topsep=1mm, itemsep=1mm}
\setlist[enumerate,2]{label = (\roman*),leftmargin=*, topsep=1mm, itemsep=1mm}
\setlist[itemize,1]{leftmargin=*, topsep=1mm, itemsep=1mm}


\usepackage{times}
\usepackage{txfonts}
\usepackage{dsfont}
\renewcommand{\mathbb}{\mathds}


\usepackage{amsmath,amssymb}
\usepackage{mathdots}
\usepackage{extpfeil}
\usepackage{extarrows}
\usepackage{oubraces}
\usepackage{array}


\usepackage{tikz}
\usetikzlibrary{cd,arrows,decorations.markings}

\tikzset{
	marrow/.style={decoration={markings,mark=at position 0.75 with {\arrow{#1}}}, postaction=decorate}
}

\tikzcdset{
	arrow style=tikz,
	diagrams={>={Straight Barb[scale=0.9, width=5pt]}} }


\usepackage[backend=biber,style=alphabetic,maxbibnames=4,doi=false,isbn=false,url=false]{biblatex}
\bibliography{trirep.bib}


\theoremstyle{plain}
\newtheorem{thm}{Theorem}[section]
\newtheorem{prp}[thm]{Proposition}

\newtheorem{lem}[thm]{Lemma}

\newtheorem*{prp*}{Proposition}
\newtheorem*{thm*}{Theorem}

\theoremstyle{definition}
\newtheorem{dfn}[thm]{Definition}
\newtheorem{ntn}[thm]{Notation}

\newtheorem{cnv}[thm]{Convention}

\theoremstyle{remark}
\newtheorem{rmk}[thm]{Remark}

\Crefname{subsection}{Subsection}{Subsections}
\Crefname{thm}{Theorem}{Theorems}
\Crefname{thmA}{Theorem}{Theorems}
\Crefname{prp}{Proposition}{Propositions}
\Crefname{cor}{Corollary}{Corollaries}
\Crefname{lem}{Lemma}{Lemmas}
\Crefname{cnj}{Conjecture}{Conjectures}
\Crefname{dfn}{Definition}{Definitions}
\Crefname{ntn}{Notation}{Notations}
\Crefname{con}{Construction}{Constructions}
\Crefname{asn}{Assumption}{Assumptions}
\Crefname{cnv}{Convention}{Conventions}
\Crefname{rmk}{Remark}{Remarks}
\crefname{rmk}{Remark}{Remarks}
\Crefname{exa}{Example}{Examples}


\newcommand{\I}{\mathcal{I}}

\newcommand{\M}{\mathcal{M}}
\newcommand{\N}{\mathcal{N}}
\newcommand{\cS}{\mathcal{S}}
\newcommand{\T}{\mathcal{T}}
\newcommand{\U}{\mathcal{U}}
\newcommand{\V}{\mathcal{V}}
\newcommand{\W}{\mathcal{W}}

\newcommand{\sfS}{\mathsf{S}}
\newcommand{\sfM}{\mathsf{M}}
\newcommand{\sfN}{\mathsf{N}}
\newcommand{\sfU}{\mathsf{U}}
\newcommand{\sfV}{\mathsf{V}}

\newcommand{\op}[1]{{#1}^\textup{op}}


\DeclareMathOperator{\id}{id}
\DeclareMathOperator{\ob}{ob}
\DeclareMathOperator{\ev}{ev}


\title[Representable functors and semiorthogonal decompositions]{Representable triangulated functors\\in terms of semiorthogonal decompositions}

\author[J.~Frank]{Jonas Frank}
\address{\linebreak
	Jonas Frank\\
	Department of Mathematics, University of Kaiserslautern-Landau\\ 
	67663 Kaiserslautern\\
	Germany
}
\email{\href{jfrank@rptu.de}{jfrank@rptu.de}}

\author[M.~Schulze]{Mathias Schulze}
\address{\linebreak
	Mathias Schulze\\
	Department of Mathematics, University of Kaiserslautern-Landau\\ 
	67663 Kaiserslautern\\
	Germany
}
\email{\href{mschulze@rptu.de}{mschulze@rptu.de}} 

\subjclass[2020]{Primary 18A22, 18E40; Secondary 18G80, 18D20}



\keywords{Representable functor, triangulated category, semiorthogonal decomposition, enriched category, monoidal category.}

\begin{document}

\begin{abstract}
	A theorem of Bondal and Kapranov lifts representations of cohomological functors from semiorthogonal decompositions of triangulated categories. We present a version of this result for triangulated functors. To this end, we introduce suitable terminology in enriched category theory and transfer the original proof.
\end{abstract}

\maketitle

\tableofcontents

\section{Introduction}

Bondal and Kapranov {\cite{BK89}} develop a categorical framework to study triangulated categories by means of filtrations. It is motivated by applications to the derived category of coherent sheaves on projective space, where such filtrations can be generated by an exceptional set or can be induced from a stratification in terms of the cohomology support.\\
Central to their approach is the notion of admissibility of (triangulated) subcategories or, more generally, of filtrations by (triangulated) subcategories: An admissible subcategory together with the respective quotient category forms a semiorthogonal decomposition of the ambient category, the successive quotients of an admissible filtration a semiorthogonal generating sequence.\\
In close relation to admissibility, the authors introduce the important notion of a Serre functor on a linear category (over a field). It generalizes the Serre--Grothendieck duality in algebraic geometry and the Nakayama functor in the representation theory of finite dimensional algebras. Serre functors are essentially unique, given on objects by representing the respective cohomological dualized hom-functor and on morphisms via the Yoneda lemma.\\
A collection of Serre functors on the successive quotients of a strongly admissible filtration lifts to a Serre functor on the ambient category. In the base case, the filtration has length two, see {\cite[Thm.~3.8]{BK89}}:

\begin{prp*}[Bondal--Kapranov]
	Let $\M$ be a triangulated category, linear over a field, with finite dimensional hom-vector spaces. Suppose that $(\U, \V)$ is a semiorthogonal decomposition of $\M$. Then $\M$ admits a Serre functor if both $\U$ and $\V$ do so.
\end{prp*}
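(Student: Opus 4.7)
The plan is to lift the Serre functors $S_\U$ and $S_\V$ to a Serre functor $S_\M$ by gluing them along the semiorthogonal decomposition. The decomposition equips every $M\in\M$ with a functorial triangle $V_M\to M\to U_M\to V_M[1]$ with $V_M\in\V$ and $U_M\in\U$, obtained from the projections $\pi_\V$ (right adjoint to the inclusion $\iota_\V\colon\V\hookrightarrow\M$) and $\pi_\U$ (left adjoint to $\iota_\U\colon\U\hookrightarrow\M$). Since both subcategories are admissible, the complementary adjoints $\pi_\V^L$ and $\pi_\U^R$ exist as well, and this is what will allow the construction below to proceed.

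First I would settle the construction on $\V$. For $V\in\V$ and any $N\in\M$ with canonical triangle $V_N\to N\to U_N$, semiorthogonality makes $\mathrm{Hom}_\M(V,U_N)$ vanish, and likewise $\mathrm{Hom}_\M(U_N[-1],S_\V V)=0$ because $S_\V V\in\V$. Hence the candidate Serre isomorphism reduces to the one in $\V$, showing that $S_\V V$, viewed as an object of $\M$, already represents $\mathrm{Hom}_\M(V,-)^*$ on all of $\M$.

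Next, for $U\in\U$, the functor $\mathrm{Hom}_\M(U,-)|_\V$ is representable by $\pi_\V^L U\in\V$ through the adjunction $\pi_\V^L\dashv\iota_\V$. Dualising and invoking Serre duality on $\V$, the contravariant functor $V'\mapsto\mathrm{Hom}_\M(U,V')^*$ is represented by $W(U):=S_\V\pi_\V^L U$. I would then define $S_\M U$ as an object fitting in a triangle
$$W(U)\longrightarrow S_\M U\longrightarrow S_\U U\longrightarrow W(U)[1],$$
with connecting morphism $S_\U U\to W(U)[1]$ chosen so as to encode the natural transformation that glues the two Serre dualities. For arbitrary $M\in\M$, the canonical triangle $V_M\to M\to U_M$ together with the values just defined yields, via the octahedron axiom, an object $S_\M M$ sitting in $S_\M V_M\to S_\M M\to S_\M U_M\to S_\M V_M[1]$. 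The Serre property $\mathrm{Hom}_\M(M,N)^*\cong\mathrm{Hom}_\M(N,S_\M M)$ would then be verified by applying the five lemma to the long exact Hom-sequences obtained from the decomposition triangles of $M$ and $N$, comparing them via the already-established Serre isomorphisms on the $\U$- and $\V$-pieces.

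\textbf{Main obstacle.} The principal difficulty is upgrading this pointwise construction to a genuine triangulated autoequivalence $S_\M\colon\M\to\M$: one has to produce natural and coherent choices of the connecting morphisms $S_\U U\to W(U)[1]$ as $U$ and $M$ vary, so that $S_\M$ becomes a functor and not merely an assignment on objects. This is precisely where the enriched categorical language developed in the paper should enter, promoting object-wise representability to functorial representability. Once this is in place, essential surjectivity and full faithfulness of $S_\M$ follow formally from the dual Serre property, so that $S_\M$ is indeed an autoequivalence.
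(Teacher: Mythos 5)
Your construction already fails at the first step, where you claim that $S_\V V$ represents $\mathrm{Hom}_\M(V,-)^*$ on all of $\M$. You invoke two vanishings at once: $\mathrm{Hom}_\M(V,U_N)=0$ requires $\mathrm{Hom}(\V,\U)=0$, while $\mathrm{Hom}_\M(U_N[-1],S_\V V)=0$ requires $\mathrm{Hom}(\U,\V)=0$. A semiorthogonal decomposition supplies only one of these directions; having both would force $\M$ to split as $\U\oplus\V$. Concretely, applying $\mathrm{Hom}_\M(-,S_\V V)$ to your canonical triangle $V_N\to N\to U_N$ leaves a correction term $\mathrm{Hom}_\M(U_N,S_\V V)$ that does not vanish, so $\mathrm{Hom}_\M(N,S_\V V)\not\cong\mathrm{Hom}_\V(V_N,S_\V V)$ in general: the Serre dual of $V\in\V$ inside $\M$ typically lands outside $\V$. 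The same mismatch undermines your triangle $W(U)\to S_\M U\to S_\U U$, whose connecting morphism you never actually produce, and the five-lemma verification at the end presupposes a commutative comparison of long exact sequences that your pointwise construction does not supply.

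The deeper problem is the one you flag yourself but do not resolve: an object-by-object construction using cones and the octahedral axiom cannot be promoted to a functor, because cones are not functorial and the required choices cannot be made coherently by hand. The paper's route (following Bondal--Kapranov) is fundamentally different and sidesteps this entirely. One does not build $S_\M$ directly; instead, for each $X\in\M$ one shows that the cohomological functor $\mathrm{Hom}_\M(X,-)^*$ is representable by invoking the representability-lifting theorem — {\cite[Thm.~2.10]{BK89}}, which \Cref{thm: BK} and \Cref{thm: BK-alt-hyp} generalize. Its restrictions to $\U$ and $\V$ are representable using $S_\U$, $S_\V$ and the adjoint projections, so the whole functor is; the representing object is defined to be $S_\M X$, and functoriality in $X$ then comes for free from the Yoneda lemma. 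Note also that the enriched framework of the paper is not a fix for your functoriality gap: its purpose is to replace vector-space-valued cohomological functors by triangulated $\sfS$-functors, not to rigidify cone choices in a single triangulated category.
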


The proof relies on a procedure for lifting representations of cohomological functors from semiorthogonal decompositions, see {\cite[Thm.~2.10]{BK89}}, which we adapt to triangulated functors. To this end, the hom-vector spaces need to be replaced by objects of another triangulated category $\sfS$. Assuming that $\sfS$ is additive monoidal, this is realized by an $\sfS$-(enriched) category $\M$ with triangulated underlying category $\sfM$, whose preadditive structure comes from $\sfS$, see \Cref{dfn: underlying-cat}. Further assuming that $\sfS$ is closed, $\sfS$ admits an internal hom. We do not assume that the tensor product or the internal hom of $\sfS$ are triangle functors in their components. The internal homs turn $\sfS$ into an $\sfS$-category $\cS$, the \emph{self-enrichment}. We call an $\sfS$-functor $\M \to \cS$ \emph{triangulated} if the underlying ordinary functor $\sfM \to \sfS$ is so. The hom-objects $\M(U, V) \in \sfS$ of $\M$ give rise to such $\sfS$-functors $\M(U, -)$ and $\M(-, V)$, \emph{(co)represented} by $U, V \in \M$. Our definition of \emph{$\sfS$-triangulated category}, see \Cref{dfn: enriched-triang}, requires that these be triangulated.
Our enriched notion of an $\sfS$-semiorthogonal decomposition $(\U, \V)$ of $\M$ imposes the enriched semiorthogonality condition $\M(\U, \V)=0$ on the $\sfS$-subcategories $\U$ and $\V$ of $\M$ retaining the decomposition $\sfM = \sfU \ast \sfV$ of the underlying categories, see \Cref{dfn: enriched-sub,dfn: SOD}.

\begin{thm*}
	Let $\sfS$ be a symmetric closed additive monoidal and triangulated category. Consider an $\sfS$-triangulated category $\M$ with semiorthogonal decompositions $(\W, \U)$ and $(\U, \V)$. Then a triangulated $\sfS$-functor $\tau \colon \M \to \cS$ is $\sfS$-representable if $\tau \vert_{\U}$ and $\tau \vert_{\V}$ are so.
\end{thm*}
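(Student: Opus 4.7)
The plan is to adapt the Bondal--Kapranov construction of a representing object \cite[Thm.~2.10]{BK89} to the $\sfS$-enriched triangulated setting. First pick $U_0 \in \U$ and $V_0 \in \V$ representing $\tau|_\U$ and $\tau|_\V$. The enriched Yoneda lemma extends these to $\sfS$-natural transformations $\alpha \colon \M(U_0, -) \to \tau$ and $\beta \colon \M(V_0, -) \to \tau$ of $\sfS$-functors $\M \to \cS$, with $\alpha|_\U$ and $\beta|_\V$ isomorphisms.

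The second semiorthogonal decomposition $(\W, \U)$ provides a right adjoint $R_\W$ to $\W \hookrightarrow \M$ and a left adjoint $L_\U$ to $\U \hookrightarrow \M$. Setting $M' := R_\W(V_0) \in \W$ and $U' := L_\U(V_0) \in \U$ yields a triangle $M' \to V_0 \xrightarrow{\eta} U' \to M'[1]$ in $\M$. Applying $\M(-, -)$ and using $\M(\U, \V) = \M(\W, \U) = 0$, one verifies $\M(M', V) \cong \M(V_0, V)$ for $V \in \V$ and $\M(M', U) = 0$ for $U \in \U$, and obtains an isomorphism $\M(U_0, U') \cong \M(U_0, M'[1])$.

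Next, I would define a morphism $\mu \colon U_0 \to U'$ in $\M$ as the image of $\eta$ under the composite $\alpha_{U'}^{-1} \circ \beta_{U'} \colon \M(V_0, U') \to \M(U_0, U')$ in $\sfS$, and let $\tilde\delta \colon U_0[-1] \to M'$ correspond to $\mu$ via the isomorphism from the previous step. Take $M \in \M$ fitting into the triangle $U_0[-1] \xrightarrow{\tilde\delta} M' \to M \to U_0$, and apply $\M(-, -)$ to obtain a triangle of $\sfS$-functors
\[
\M(U_0, -) \to \M(M, -) \to \M(M', -) \to \M(U_0, -)[1].
\]
By the analysis of $\M(M', -)$ above, the pointwise cone of $\alpha$ agrees with $\M(M', -)$ on $\U$ and on $\V$, and hence on all $N \in \M$ via the $(\U, \V)$-triangle and triangulatedness of $\tau$; combined with the compatibility of connecting morphisms built into the definition of $\mu$, this shows that $\tau$ fits into a compatible triangle $\M(U_0, -) \xrightarrow{\alpha} \tau \to \M(M', -) \to \M(U_0, -)[1]$. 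The triangulated isomorphism principle then yields the $\sfS$-isomorphism $\tau \cong \M(M, -)$.

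The main obstacle is showing that this matching of triangles of $\sfS$-functors is $\sfS$-natural, not merely pointwise. The argument genuinely exploits both decompositions: $(\U, \V)$ provides the SOD triangle of every $N \in \M$, governing the values of $\tau$ and $\M(M, -)$, while $(\W, \U)$ furnishes the representing object $M' \in \W$ together with the unit $\eta \in \M(V_0, U')$ that encodes the correct connecting data via $\mu$. Without the two-sided admissibility of $\U$ guaranteed by the presence of both decompositions, neither $M'$ nor the correct $\eta$ would be available.
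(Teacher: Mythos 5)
Your proposal correctly identifies the representing object (your $M$ is, up to isomorphism, the paper's $\tilde X$, the total object of the homotopy cartesian square on $u \colon \tilde U \to U'$ and $v \colon \tilde V \to U'$), and you correctly read off $v$ from the $(\W, \U)$-decomposition of $\tilde V$ and produce $u$ (your $\mu$) via the Yoneda correspondences. Unwinding the definitions, your $\mu = \alpha_{U'}^{-1}\circ\beta_{U'}\circ\eta$ is exactly the paper's $u = (\eta^\U_{U'})^{-1}\circ\tau v\circ e_\V$, so the \emph{object} you construct is right. The problem is the one you yourself flag at the end: the concluding step, deducing $\tau \cong \M(M,-)$ from a claimed matching of two \enquote{triangles of $\sfS$-functors}, is not a valid argument. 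Under the paper's hypotheses the category of $\sfS$-functors $\M \to \cS$ carries no triangulated structure --- we deliberately do \emph{not} assume that $\otimes$ or $[-,-]$ are triangle functors in each variable --- so there is no \enquote{triangulated isomorphism principle} to invoke. And even pointwise completions of the square need not assemble into an $\sfS$-natural transformation; naturality is exactly what is missing and is not supplied by your appeal to \enquote{compatibility of connecting morphisms built into the definition of $\mu$.}

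The paper sidesteps this entirely by never asserting that $\tau$ sits in a triangle of functors. It instead builds the $\sfS$-natural transformation $\eta\colon\M(\tilde X,-)\to\tau$ directly from a \emph{single} element $e\in\sfS(I,\tau\tilde X)$ via the weak enriched Yoneda lemma (\Cref{thm: Yoneda}), which hands you $\sfS$-naturality for free. The element $e$ is produced by completing a morphism of two distinguished triangles in $\sfS$ (applying $\M(\tilde X,-)$ and $\tau$ to the total triangle $\tilde X\to\tilde U\oplus\tilde V\to U'$) to get an isomorphism $\eta_{\tilde X}\colon\M(\tilde X,\tilde X)\to\tau\tilde X$, and the existence of that completion rests on the key commutativity $\tau v\circ\rho^\V_{\tilde V}=\rho^\U_{U'}\circ\M(\tilde X,v)$ established in step \ref{thm: BK-prep} --- this is the precise, verified form of the \enquote{compatibility} you gesture at. One then checks that $\eta|_\U=\rho^\U$ and $\eta|_\V=\rho^\V$ (step \ref{thm: BK-res-iso}) and finishes by the pointwise criterion of \Cref{lem: iso-components}, using the $(\U,\V)$-triangle of an arbitrary $X\in\M$. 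So: right object, right intermediate morphisms, but the argument must be routed through the Yoneda lemma to a single element to secure $\sfS$-naturality, not through a non-existent TR3 in a functor category. Supplying that routing is the actual content of steps \ref{thm: BK-prep}--\ref{thm: BK-res-iso} and is not optional.
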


In a similar spirit, Steen and Stevenson {\cite{SS17}} establish a enriched version of Brown representability. In order to adapt Neeman's classical proof {\cite{Nee96}}, they require the existence of copowers in $\M$. As opposed to our setup, their hypotheses include that the tensor product and the internal hom of $\sfS$ are triangle functors in each component.

\section{Monoidal and enriched categories}

In this and the next section, we collect the essentials on monoidal and enriched categories. In particular, we recall the weak form of the enriched Yoneda lemma and discuss implications of the bifunctoriality of hom-objects. Our main reference on this topic is the reprint {\cite{Kel05}} of Kelly's book {\cite{Kel82}}.

\begin{ntn} \
	\begin{enumerate}
		\item We denote the data of a ((pre)additive) monoidal category $\sfS$ as follows: $I \in \sfS$ is the \emph{unit object}, $\otimes \colon \sfS \times \sfS \to \sfS$ the (biadditive) \emph{tensor} bifunctor, and
		\[a=a_{X, Y, Z} \colon (X \otimes Y) \otimes Z \longrightarrow X \otimes (Y \otimes Z), \hspace{2.5mm} l=l_X \colon I \otimes X \to X, \hspace{2.5mm} \textup{ and } \hspace{2.5mm} r=r_X \colon X \otimes I \to X\]
		are the \emph{associativity}, \emph{left} and \emph{right unit} isomorphisms, respectively, natural in $X, Y, Z \in \sfS$. For symmetric $\sfS$, there are \emph{symmetry} isomorphisms \[c=c_{X, Y} = c_{Y, X}^{-1} \colon X \otimes Y \to Y \otimes X,\] natural in $X, Y \in \sfS$. Compatibility of these components is due to certain \emph{coherence axioms}, see {\cite[(1.1), (1.2), (1.15), (1.16)]{Kel05}}.
		
		\item We denote the data of an $\sfS$-(enriched) category $\M$ as follows: $\ob \M$ is the set of \emph{objects}, $\M(U, V) \in \sfS$ the \emph{hom-objects}, and
		\[M=M_{U, V, W} \colon \M(V, W) \otimes \M(U, V) \longrightarrow \M(U, W) \hspace{2.5mm} \textup{ and } \hspace{2.5mm} j=j_U \colon I \to \M(U, U)\]
		are the \emph{composition} and the \emph{identity} morphisms, for $U, V, W \in \ob \M$. These satisfy \emph{associativity} and \emph{unit axioms}, ensuring their compatibility (with the data defining $\sfS$), see {\cite[(1.3), (1.4)]{Kel05}}. We write $U \in \M$ for $U \in \ob \M$.
		
		\item For symmetric $\sfS$, let $\op \M$ denote the $\sfS$-category with $\ob \op \M := \ob \M$, $\op \M(U, V) := \M(V, U)$, for $U, V \in \op \M$, composition $M \circ c$, and the same identity morphisms as $\M$.
	\end{enumerate}
\end{ntn}

\begin{rmk} \label{rmk: r=l-I}
	Using the coherence axioms and the naturality of the isomorphism $r$, one can show that $r_I=l_I$, see {\cite[Cor.~2.2.5]{Eti+15}}. This means that $c_{I, I} = \id_{I \otimes I}$ in the symmetric case.
\end{rmk}

\begin{dfn} \label{dfn: underlying-cat}
	For a (preadditive) monoidal category $\sfS$, the (preadditive) category $\sfM$ \textbf{underlying} the $\sfS$-category $\M$ has objects $\ob \sfM := \ob \M$ and morphisms $$\sfM(U, V) := \sfS(I, \M(U, V)),$$ for $U, V \in \M$. The (biadditive) composition $gf := g \circ f := g \circ_\sfM f \colon U \to W$ is given by 
	\begin{center}
		\begin{tikzcd}
			I \ar[r, "l^{-1} \, = \, r^{-1}"] & I \otimes I \ar[r, "g \otimes f"] & \M(V, W) \otimes \M(U, V) \ar[r, "M"] & \M(U, W),
		\end{tikzcd}
	\end{center}
	for two morphisms $f \colon U \to V$ and $g \colon V \to W$ in $\sfM$, see \Cref{rmk: r=l-I}. Identity morphisms are then given by $\id_U := j_U$, for $U \in \M$. Note that $\op\sfM$ underlies $\op \M$, see {\cite[\S1.4]{Kel05}}.
\end{dfn}

\begin{dfn}
	Let $\M$ and $\N$ be $\sfS$-categories.
	\begin{enumerate}
		\item An \textbf{$\sfS$-functor} $\tau \colon \M \to \N$, consists of a map $\ob \M \to \ob \N$, $U \mapsto \tau U$, and morphisms
		\[\tau = \tau_{U, V} \colon \M(U, V) \to \N(\tau U, \tau V)\] in $\sfS$, for each $U, V \in \M$, such that
		\begin{gather} \label{diag: dfn-functor}
			\begin{tikzcd}[row sep={17.5mm,between origins}, column sep={35mm,between origins}, ampersand replacement=\&]
				\M(V, W) \otimes \M(U, V) \ar[r, "M"] \ar[d, "\tau \otimes \tau"] \& \M(U, W) \ar[d, "\tau"] \\
				\N(\tau V, \tau W) \otimes \N(\tau U, \tau V) \ar[r, "M"] \& \N(\tau U, \tau W)
			\end{tikzcd}
			\hspace{2.5mm} \textup{and} \hspace{2.5mm}
			\begin{tikzcd}[column sep={17.5mm,between origins}, row sep={8.75mm,between origins}, ampersand replacement=\&]
				\& \M(U, U) \ar[dd, "\tau"] \\
				I \ar[ru, "j"] \ar[rd, "j"'] \& \\
				\& \N(\tau U, \tau U)
			\end{tikzcd}
		\end{gather}
		commute, for all $U, V, W \in \M$, see {\cite[(1.5), (1.6)]{Kel05}}.
		
		\item An \textbf{$\sfS$-natural transformation} $\eta = (\eta_U)_{U \in \M} \colon \sigma \to \tau$ of $\sfS$-functors $\sigma, \tau \colon \M \to \N$ consists of \textbf{component} morphisms $\eta_U \colon \sigma U \to \tau U$ in $\sfN$, for each $U \in \M$, satisfying the \textbf{$\sfS$-naturality condition}, see {\cite[(1.7)]{Kel05}}. If $\sfS$ is closed, there is an alternative condition, see \Cref{lem: nat-alt}. We denote the collection of such $\sfS$-natural transformations by $\sfS\textup{-Nat}(\sigma, \tau)$.
		
		\item The \textbf{identity} of an $\sfS$-functor $\tau\colon \M \to \N$ is the $\sfS$-natural transformation $\id_\tau \colon \tau \to \tau$ defined by $(\id_\tau)_U := j_{\tau U}$, for each $U \in \M$.
		
		\item The \textbf{(vertical) composite} of two $\sfS$-natural transformations $\eta \colon \sigma \to \tau$ and $\theta \colon \tau \to \upsilon$ of $\sfS$-functors $\sigma, \tau, \nu \colon \M \to \N$ is the $\sfS$-natural transformation $\theta \eta := \theta \circ \eta \colon \sigma \to \upsilon$ defined by $(\theta \eta)_U := \theta_U \circ_\sfN \eta_U$, for each $U \in \M$.
		
		\item An $\sfS$-natural transformation $\eta \colon \sigma \to \tau$ of $\sfS$-functors $\sigma, \tau \colon \M \to \N$ is an \textbf{$\sfS$-natural isomorphism} or \textbf{isomorphism of $\sfS$-functors} if $\theta \eta = \id_\sigma$ and $\eta \theta = \id_\tau$, for some $\sfS$-natural transformation $\theta \colon \tau \to \sigma$. Equivalently, if $\sfS$ is closed, $\eta_U$ is an isomorphism in $\sfN$, for each $U \in \M$, see \Cref{lem: iso-components}.
	\end{enumerate}
\end{dfn}

\begin{dfn} \label{dfn: underlying-functor}
	For a (preadditive) monoidal category $\sfS$, any $\sfS$-functor $\tau \colon \M \to \N$ has an \textbf{underlying} (preadditive) functor $\tau \colon \sfM \to \sfN$, denoted by the same symbol, defined on morphisms $f \in \sfM(U, V)$ by
		\begin{center}
			\begin{tikzcd}
				\sfN(\tau U, \tau V) \ni \tau f := \sfS(I, \tau_{U, V})(f) \colon I \ar[r, "f"] & \M(U, V) \ar[r, "\tau_{U, V}"] & \N(\tau U, \tau V).
			\end{tikzcd}
		\end{center}
\end{dfn}

\begin{rmk} \label{rmk: underlying-nat}
	Any $\sfS$-natural transformation is an (ordinary) natural transformation of the underlying functors, given by the exact same data. In general, (ordinary) naturality is weaker than $\sfS$-naturality, see {\cite[\S1.3]{Kel05}}.
\end{rmk}

\begin{rmk}
	A more conceptual approach defines the underlying category of an $\sfS$-category $\M$ as the category of $\sfS$-functors $\I \to \M$, where $\I$ is a certain \emph{unit $\sfS$-category}, see {\cite[\S1.3]{Kel05}}. Accordingly, morphisms in $\sfM$ are $\sfS$-natural transformations between such functors. The functor underlying an $\sfS$-functor $\tau \colon \M \to \N$ then operates on morphisms by whiskering from the left, which gives the same meaning to the notation $\tau f$ as in \Cref{dfn: underlying-functor}.
\end{rmk}

\begin{dfn} \label{dfn: int-hom}
	An (additive) monoidal category $\sfS$ is said to be \textbf{closed} if, for each $Y \in \sfS$, the (additive) functor $- \otimes Y \colon \sfS \to \sfS$ has an (additive) right adjoint $\left[Y , -\right] \colon \sfS \to \sfS$, the \textbf{internal hom}.\footnote{Note that adjoints between additive categories are additive.} The adjunction (group) isomorphisms
	\begin{gather}\label{eqn: adjunction}
		\pi = \pi_{X, Y, Z} \colon \sfS(X \otimes Y, Z) \xlongrightarrow{\cong} \sfS(X, [Y, Z]), \; f \longmapsto \tilde f,
	\end{gather}
	are natural in $X, Z \in \sfS$. We denote by $\ev = \ev_{Y, Z} \colon [Y, Z] \otimes Y \to Z$ its counit, the \textbf{evaluation}.
\end{dfn}

\begin{rmk} \label{rmk: rules} Let $\sfS$ be a closed (additive) monoidal category.
	\begin{enumerate}
		\item \label{rmk: rules-co-unit} Note that $f$ and $\tilde f$ in \eqref{eqn: adjunction} are related by the commutative squares
		\begin{center}
			\begin{tikzcd}[sep={17.5mm,between origins}]
				X \otimes Y \ar[r, "f"] \ar[d, "\tilde f \otimes Y"'] & Z \\
				\left[Y, Z \right] \otimes Y \ar[ru, "\ev_{Y, Z}"']
			\end{tikzcd}
			\hspace{2.5mm} and \hspace{2.5mm}
			\begin{tikzcd}[sep={17.5mm,between origins}]
				X \ar[r, "\tilde f"] & \left[Y, Z\right] \ar[d, "{[}Y{,}\,f{]}"] \\
				& \left[Y, X \otimes Y \right] \ar[lu, "d_{X, Y}"],
			\end{tikzcd}
		\end{center}
		where $d = d_{X, Y} \colon \left[Y, X \otimes Y \right] \to X$ is the unit of the adjunction, see {\cite[\S1.9]{Kel05}}.
		
		\item Consider an object $Y \in \sfS$ and morphisms $f \colon X' \to X$ and $h \colon Z \to Z'$ in $\sfS$. Since the isomorphism $\pi_{X, Y, Z}$ from \eqref{eqn: adjunction} is natural in $X$ and $Z$, we obtain a commutative diagram
		\begin{center}
			\begin{tikzcd}[row sep={17.5mm,between origins}, column sep={35mm,between origins}]
				\sfS(X \otimes Y, Z) \ar[r, "\pi_{X, Y, Z}", "\cong"'] \ar[d, "\sfS(f \otimes Y{,}\,h)"'] & \sfS(X, [Y, Z]) \ar[d, "\sfS(f{,}\,{[}Y{,}\,h{]})"]\\
				\sfS(X' \otimes Y, Z') \ar[r, "\pi_{X', Y, Z'}", "\cong"'] & \sfS(X', [Y, Z']).
			\end{tikzcd}
		\end{center}
		Given any morphism $g \colon X \otimes Y \to Z$ in $\sfS$, this means that
		\begin{gather}\label{eqn: rule-binat}
			\pi(h \circ g \circ (f \otimes Y)) = [Y, h] \circ \pi(g) \circ f.
		\end{gather}
		
		\item \label{rmk: rules-mid-arg} Let $Z \in \sfS$ and $f \colon Y' \to Y$ be a morphism in $\sfS$. Since $\otimes$ is an (additive) bifunctor, the ordinary Yoneda lemma yields a unique morphism $[f, Z] \colon [Y, Z] \to [Y', Z]$ in $\sfS$ defined by the following commutative square of natural transformations:
		\begin{center}
			\begin{tikzcd}[row sep={17.5mm,between origins}, column sep={35mm,between origins}]
				\sfS(- \otimes Y, Z) \ar[r, "\pi_{-, Y, Z}", "\cong"'] \ar[d, "\sfS(- \otimes f{,}\,Z)"'] & \sfS(-, \left[Y, Z\right]) \ar[d, dashed, "\sfS(-{,}\,{[}f{,}\, Z{]})"]\\
				\sfS(- \otimes Y', Z) \ar[r, "\pi_{-, Y, Z'}", "\cong"'] & \sfS(-, \left[Y', Z\right])
			\end{tikzcd}
		\end{center}
		This gives rise to an (additive) functor $[-, Z] \colon \sfS \to \sfS$. Given $X \in \sfS$ and any morphism $g \colon X \otimes Y \to Z$ in $\sfS$, the commutativity of the above square means that
		\begin{gather}\label{eqn: rule-mid-arg}
			\pi(g \circ (X \otimes f)) = \left[f, Z\right] \circ \pi(g).
		\end{gather}
	\end{enumerate}
\end{rmk}

\begin{prp}[{\cite[Prop.~6.1.7]{Bor94}}] \label{prp: int-hom-bifunctor}
	For a closed monoidal category $\sfS$, there is a bifunctor $[-,-] \colon \op \sfS \times \sfS \to \sfS$, whose partial functors are $[Y, -]$ and $[-, Z]$, for each $Y, Z \in \sfS$, see \Cref{rmk: rules}.\ref{rmk: rules-mid-arg}. \qed
\end{prp}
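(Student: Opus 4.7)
The proposition asserts that the partial functors $[Y,-] \colon \sfS \to \sfS$ and $[-,Z] \colon \op\sfS \to \sfS$ introduced in \Cref{rmk: rules}.\ref{rmk: rules-mid-arg} assemble into a single bifunctor. Since they already agree on objects, the task reduces to verifying the \emph{interchange law}: for all $f \colon Y' \to Y$ and $g \colon Z \to Z'$ in $\sfS$, the identity
\[
[f,Z'] \circ [Y,g] \;=\; [Y',g] \circ [f,Z]
\]
holds in $\sfS([Y,Z],[Y',Z'])$. Once this is in hand, I would define $[f,g]$ as this common value; preservation of identities and composition then follows formally from the functoriality of each partial functor together with the interchange law, so no further check is required.

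The plan is to reduce the interchange law to the (ordinary) Yoneda lemma by testing against an arbitrary $h \colon X \to [Y,Z]$ and setting $k := \pi_{X,Y,Z}^{-1}(h) \colon X \otimes Y \to Z$. Applying \eqref{eqn: rule-binat} (with its auxiliary first argument taken to be $\id_X$) and \eqref{eqn: rule-mid-arg} in succession, both sides evaluate to
\[
[f,Z'] \circ [Y,g] \circ h \;=\; \pi_{X,Y',Z'}\bigl(g \circ k \circ (X \otimes f)\bigr) \;=\; [Y',g] \circ [f,Z] \circ h.
\]
The left chain of equalities shuttles $g$ through $\pi$ first and then $f$; the right chain reverses that order, but in both cases one lands on the same composite $X \otimes Y' \to Z'$. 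Since $X$ and $h$ were arbitrary, the Yoneda lemma forces the two outer morphisms $[Y,Z] \to [Y',Z']$ to coincide.

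I do not foresee any genuine obstacle. The closure assumption supplies the natural bijections $\pi$ along with the two naturality rules \eqref{eqn: rule-binat} and \eqref{eqn: rule-mid-arg}, and these do all the work; neither the additive structure nor the monoidal coherence data intervene beyond what is already absorbed into \Cref{rmk: rules}. The main point requiring care will be the bookkeeping across the several distinct instances of the adjunction isomorphism $\pi$ appearing along the way, so that each invocation of the naturality rules is applied to morphisms with matching source and target.
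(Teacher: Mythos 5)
Your argument is correct and is the standard proof of this fact. The paper itself offers no proof and merely cites \cite[Prop.~6.1.7]{Bor94}, so there is nothing in the text to compare against; your reduction of the interchange law $[f,Z'] \circ [Y,g] = [Y',g] \circ [f,Z]$ to a Yoneda check, shuttling $f$ and $g$ through $\pi$ via \eqref{eqn: rule-binat} and \eqref{eqn: rule-mid-arg} in either order and landing both times on $\pi\bigl(g \circ k \circ (X \otimes f)\bigr)$, is exactly what Borceux does. (One small simplification you might note: taking $X = [Y,Z]$ and $h = \id_{[Y,Z]}$ makes the Yoneda step unnecessary, since $k$ then becomes the counit $\ev_{Y,Z}$ and the computation directly yields the two composites as equal elements of $\sfS([Y,Z],[Y',Z'])$.)
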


\begin{ntn}\
	\begin{enumerate}
		\item For $Y \in \sfS$, we write $i_Y := \pi(r_Y) \in \sfS(Y, [I, Y])$, which is a natural isomorphism, see {\cite[\S1.5]{Kel05}}.
		
		\item For $Y \in \sfS$ and a morphism $e \colon I \to X$ in $\sfS$, we call the composite
		\[-(e) = -(e)_{X, Y} \colon [X, Y] \xrightarrow{[e, Y]} [I, Y] \xrightarrow{i^{-1}_{Y}} Y\]
		the \textbf{evaluation} at $e$. It can be written in terms of the evaluation from \Cref{dfn: int-hom} as $-(e) \circ r_{[X, Y]} = \ev_{I, Y} \circ \left([e, Y] \otimes I \right)$, due to \Cref{rmk: rules}.\ref{rmk: rules-co-unit} and the naturality of $r$.
	\end{enumerate}
\end{ntn}

\begin{rmk} \label{rmk: evaluation}
	Let $Y \in \sfS$ and $e \colon I \to X$ be a morphism in $\sfS$.
	\begin{enumerate}
		\item \label{rmk: evaluation-comp} By definition, $-(f \circ e) = -(e) \circ [f, Y]$, for any morphism $f \colon X \to X'$ in $\sfS$.
		
		\item \label{rmk: evaluation-nat} The morphism $-(e) \colon [X, Y] \to Y$ is natural in $Y$ since $[-,-]$ is a bifunctor, see \Cref{prp: int-hom-bifunctor}, and $i^{-1}$ natural: For any morphism $g \colon Y \to Y'$, there is a commutative diagram
		\begin{center}
			\begin{tikzcd}[sep={17.5mm,between origins}]
				\left[X, Y \right] \ar[rd, "{[}e{,}\, Y{]}"] \ar[rrr, "{[}X{,}\, g{]}"] \ar[dd, "-(e)_{X, Y}"'] &&& \left[X, Y'\right] \ar[ld, "{[}e{,}\, Y'{]}"'] \ar[dd, "-(e)_{X, Y'}"] \\
				& \left[I, Y\right] \ar[ld, "i_Y^{-1}"'] \ar[r, "{[}I{,}\, g{]}"] & \left[I, Y'\right] \ar[rd, "i_{Y'}^{-1}"] \\
				Y \ar[rrr, "g"] &&& Y'.
			\end{tikzcd}
		\end{center}
	\end{enumerate}
\end{rmk}

\begin{prp}[{\cite[\S1.6]{Kel05}}]
	Any closed monoidal category $\sfS$ yields an $\sfS$-category $\cS$ with objects $\ob \cS := \ob \sfS$ and hom-objects $\cS(X, Y) := [X, Y]$, for each $X, Y \in \cS$. Under the adjunction \eqref{eqn: adjunction}, the composition $M \colon [Y, Z] \otimes [X, Y] \to [X, Z]$ corresponds to
	\begin{center}
		\begin{tikzcd}
			\left(\left[Y, Z\right] \otimes \left[X, Y\right]\right) \otimes X \ar[r, "a", "\cong"'] & \left[Y, Z\right] \otimes \left( \left[X, Y\right] \otimes X \right) \ar[r, "\id \otimes \ev"] & \left[Y, Z\right] \otimes Y \ar[r, "\ev"] & Z,
		\end{tikzcd}
	\end{center}
	and the identity element $j_X \colon I \to [X, X]$ to $l_X \colon I \otimes X \to X$, for each $X, Y, Z \in \cS$. \qed
\end{prp}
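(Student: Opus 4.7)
The plan is to verify the associativity and the two unit axioms of an $\sfS$-enriched category for the stated data. Set $M := \pi(\ev_{Y,Z} \circ (\id_{[Y,Z]} \otimes \ev_{X,Y}) \circ a)$ and $j_X := \pi(l_X)$; both are well-defined by \Cref{dfn: int-hom}. Since $\pi$ is a bijection, each axiom may be pulled back under $\pi$ to an equality of morphisms in $\sfS$ whose codomain is a ``plain'' object and whose only ingredients are the evaluations $\ev$, the associator $a$, and the unitors $l$, $r$.

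For associativity, I would transpose both composites of the standard square under $\pi^{-1}$ after tensoring on the right with the outermost object. Rule \eqref{eqn: rule-binat} (with $h = \id$) lets $\pi^{-1}$ absorb the outer occurrence of $M$ on each side; the inner occurrence is then exposed as $\ev \circ (M \otimes \id)$ and replaced by its defining composite $\ev \circ (\id \otimes \ev) \circ a$, by \Cref{rmk: rules}.\ref{rmk: rules-co-unit}. After simplification via bifunctoriality of $\otimes$ and naturality of $a$, each side reduces to a triple nested evaluation $\ev \circ (\id \otimes \ev) \circ (\id \otimes \id \otimes \ev)$ composed with a specific rearrangement of associators. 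Equality of these two associator rearrangements is precisely the pentagon coherence axiom for $\sfS$.

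For the left unit axiom $M \circ (j_Y \otimes \id_{[X,Y]}) = l_{[X,Y]}$, transposing the left-hand side under $\pi^{-1}$ via \eqref{eqn: rule-binat} yields $\ev_{Y,Y} \circ (\id_{[Y,Y]} \otimes \ev_{X,Y}) \circ a \circ ((j_Y \otimes \id_{[X,Y]}) \otimes \id_X)$. Naturality of $a$, bifunctoriality of $\otimes$, the identity $\ev_{Y,Y} \circ (j_Y \otimes \id_Y) = l_Y$ (which follows from $j_Y = \pi(l_Y)$), and naturality of $l$ then reduce this to $\ev_{X,Y} \circ l_{[X,Y] \otimes X} \circ a_{I, [X,Y], X}$. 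The triangle coherence identity $l_{A \otimes B} \circ a_{I, A, B} = l_A \otimes \id_B$ finally delivers $\ev_{X,Y} \circ (l_{[X,Y]} \otimes \id_X) = \pi^{-1}(l_{[X,Y]})$, completing the axiom. The right unit axiom is analogous, with $r$ in place of $l$ and the corresponding triangle identity.

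The main obstacle is purely bookkeeping: each transposition under $\pi$ introduces associators and unitors that must be tracked carefully so that the pentagon or triangle coherence axiom can be invoked at precisely the right moment. No additive or symmetric structure of $\sfS$ is used.
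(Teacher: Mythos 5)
Your proposal is correct and is essentially the argument behind the paper's own treatment, which offers no proof at all but simply defers to {\cite[\S 1.6]{Kel05}}: transposing the enriched associativity and unit axioms through the adjunction \eqref{eqn: adjunction}, absorbing the occurrences of $M$ via \eqref{eqn: rule-binat} and the counit description of $\pi^{-1}$, and reducing the remaining discrepancies to the pentagon and unit coherences is exactly the standard verification. One small label to correct: the identity $l_{A\otimes B}\circ a_{I,A,B}=l_A\otimes\id_B$ you invoke for the left unit axiom is not the triangle axiom itself but a well-known derived consequence of the coherence axioms (compare \Cref{rmk: r=l-I}), whereas the right unit axiom does use the genuine triangle identity $(\id_A\otimes l_B)\circ a_{A,I,B}=r_A\otimes\id_B$ directly.
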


\begin{rmk} \label{rmk: self-enrichment}
	For a closed (additive) monoidal category $\sfS$, the underlying (preadditive) category of the $\sfS$-category $\cS$ is (additively) isomorphic to the (additive) category $\sfS$. Therefore, $\cS$ is referred to as the \textbf{self-enrichment} of $\sfS$. Indeed, both categories have the same objects and there are (group) isomorphisms
	\begin{gather}
		\begin{tikzcd}[ampersand replacement=\&]
			\sfS(X, Y) \ar[r, "\sfS(l{,}\, Y)", "\cong"'] \& \sfS(I \otimes X, Y) \ar[r, "\pi_{I, X, Y}", "\cong"'] \& \sfS(I, [X, Y]),
		\end{tikzcd}
	\end{gather}
	for all $X, Y \in \cS$, compatible with identities and compositions, see {\cite[\S1.6]{Kel05}}.
\end{rmk}

\section{Enriched representability and Yoneda lemma}

In this section, $\sfS$ is a closed (additive) monoidal category with self-enrichment $\cS$.

\begin{cnv}
	For any $\sfS$-functor $\tau \colon \M \to \cS$, we use the identification from \Cref{rmk: self-enrichment} to redefine the target of its (additive) underlying functor to be the category $\sfS$. More explicitly, we replace \[\sfS(I, \tau_{U, V}) \colon \sfM(U, V) = \sfS(I, \M(U, V)) \to \sfS(I, [\tau U, \tau V]), \; f \mapsto \tau f,\] for all $U, V \in \M$, by 
	\begin{center}
		\begin{tikzcd}[ampersand replacement=\&]
			\sfM(U, V) \ar[r, "\sfS(I{,}\, \tau_{U, V})"] \& \sfS(I, [\tau U, \tau V]) \ar[r, "\pi_{I, \tau U, \tau V}^{-1}", "\cong"'] \& \sfS(I \otimes \tau U, \tau V) \ar[r, "\sfS(l^{-1}{,}\, \tau V)", "\cong"'] \& \sfS(\tau U, \tau V).
		\end{tikzcd}
	\end{center}
\end{cnv}

\begin{prp}[{\cite[\S1.6]{Kel05}}]
	Let $\M$ be an $\sfS$-category. For any $U\in \M$, there is an $\sfS$-functor $\M(U, -) \colon \M \to \cS$ given by $V \mapsto \M(U, V)$ and
	\begin{center}
		\begin{tikzcd}
			\M(U,-)_{V, W} \colon \M(V, W) \ar[r] & \left[\M(U, V), \M(U, W)\right]
		\end{tikzcd}
	\end{center}
	corresponding to the composition $M \colon \M(V, W) \otimes \M(U, V) \to \M(U, W)$ under the adjunction \eqref{eqn: adjunction}, for each $V, W \in \M$. Replacing $\M$ by $\op \M$, yields an $\sfS$-functor $\M(-, W) \colon \op \M \to \cS$, for any $W \in \M$. \qed
\end{prp}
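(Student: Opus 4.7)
The plan is to define $\tau := \M(U,-)$ on objects by $V \mapsto \M(U,V)$ and on hom-objects by $\tau_{V,W} := \pi(M_{U,V,W})$, with $\pi$ the adjunction bijection from \eqref{eqn: adjunction}. Well-definedness is then automatic, and what remains is to check the two compatibility diagrams of \Cref{diag: dfn-functor}.

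For the composition axiom, given $V_1, V_2, V_3 \in \M$, the goal is to show that $M_\cS \circ (\tau_{V_2,V_3} \otimes \tau_{V_1,V_2})$ equals $\tau_{V_1,V_3} \circ M_{V_1,V_2,V_3}$ as morphisms $\M(V_2,V_3) \otimes \M(V_1,V_2) \to [\M(U,V_1), \M(U,V_3)]$. I would transpose both sides through $\pi^{-1}$ (formally using \eqref{eqn: rule-binat}, tensoring with $\M(U,V_1)$ and postcomposing with $\ev$). On the left, unwinding the definition of $M_\cS$ as $\ev \circ (\id \otimes \ev) \circ a$, then applying the counit relation $\ev \circ (\pi(f) \otimes \id) = f$ from \Cref{rmk: rules}.\ref{rmk: rules-co-unit} twice and using naturality of $a$ to commute it past $\tau_{V_2,V_3} \otimes \tau_{V_1,V_2}$, the transpose becomes $M_{U,V_2,V_3} \circ (\id \otimes M_{U,V_1,V_2}) \circ a$. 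The right-hand side transposes by the same counit relation to $M_{U,V_1,V_3} \circ (M_{V_1,V_2,V_3} \otimes \id)$. Equality of the two transposes is exactly the associativity axiom of the $\sfS$-category $\M$.

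For the identity axiom, the transpose of $\tau_{W,W} \circ j_W$ under $\pi^{-1}$ is $M_{U,W,W} \circ (j_W \otimes \id_{\M(U,W)})$, again by the counit relation, while $j^\cS_{\M(U,W)}$ transposes to $l_{\M(U,W)}$ by the definition of the identity in $\cS$ recorded in the preceding proposition. Their equality is precisely the left unit axiom of $\M$.

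The main obstacle will be the bookkeeping of associator and adjunction isomorphisms in the composition axiom: every structural isomorphism arising from the two-step evaluation in $M_\cS$ and from \eqref{eqn: rule-binat} must be tracked carefully, and naturality of $a$ has to be invoked to move it past the tensor of the $\tau$-components. Once both sides are transposed, however, the axioms reduce transparently to those of $\M$. The opposite-category statement $\M(-,W) \colon \op\M \to \cS$ then follows immediately by applying the same construction to $\op\M$.
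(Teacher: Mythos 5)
Your proposal is correct and matches the standard argument (the paper itself just cites Kelly, \S 1.6, where exactly this transposition argument is carried out): defining $\tau_{V,W}=\pi(M_{U,V,W})$ and transposing both functor axioms through the adjunction via the counit relation of \Cref{rmk: rules}.\ref{rmk: rules-co-unit} reduces them precisely to the associativity and unit axioms of $\M$, with the description of $M_\cS$ and $j_\cS$ from the self-enrichment proposition and naturality of $a$ handling the bookkeeping. The dual statement for $\M(-,W)$ indeed follows by applying the construction to $\op\M$, so nothing is missing.
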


\begin{dfn}
	Let $\M$ be an $\sfS$-category. An $\sfS$-functor $\tau \colon \M \to \cS$ is called \textbf{$\sfS$-representable} if it is $\sfS$-naturally isomorphic to the $\sfS$-functor $\M(U, -)$ \textbf{represented} by some $U \in \M$. Dually, one defines \textbf{$\sfS$-corepresentable} and \textbf{corepresented} $\sfS$-functors $\op \M \to \cS$.
\end{dfn}

\begin{thm}[Weak enriched Yoneda lemma, {\cite[\S1.9]{Kel05}}] \label{thm: Yoneda}
	Let $\sfS$ be a symmetric closed monoidal category with self-enrichment $\cS$. For any $\sfS$-functor $\tau\colon \M \to \cS$ and $U \in \M$, there is a bijection
	\[\sfS\textup{-Nat}(\M(U,-), \tau) \longleftrightarrow \sfS(I, \tau U), \quad \eta = (\eta_V)_{V \in \M} \longleftrightarrow e,\]
	defined by the the following commutative diagrams in $\sfS$:
	\begin{center}
		\begin{tikzcd}[sep={17.5mm,between origins}]
			\M(U, V) \ar[rr, dashed, "\eta_V"] \ar[rd, "\tau_{U, V}"'] && \tau V \\
			& \left[\tau U, \tau V \right] \ar[ru, "-(e)"']
		\end{tikzcd}
		\hspace{1cm}
			\begin{tikzcd}[sep={17.5mm,between origins}]
			I \ar[rr, dashed, "e"] \ar[rd, "j_U"'] && \tau U \\
			& \M(U, U) \ar[ru, "\eta_U"']
		\end{tikzcd}
	\end{center}
	In particular, $\sfS$-natural transformations $\eta \colon \M(U, -) \to \M(U', -)$ correspond to morphisms $f \colon U' \to U$ in $\sfM$ via
	$\eta = \M(f, -) := \left( \M(f, V) \right)_{V \in \M}$.\qed
\end{thm}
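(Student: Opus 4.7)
I would exhibit the two maps of the claimed bijection explicitly and then verify that they are mutually inverse. Given an $\sfS$-natural transformation $\eta \colon \M(U,-) \to \tau$, send it to $e := \eta_U \circ_{\sfM} j_U \in \sfS(I, \tau U)$, reading $\eta_U$ as a morphism in $\sfM$ via \Cref{rmk: underlying-nat}. Conversely, given $e \colon I \to \tau U$, define $\eta_V$ to be the morphism in $\sfM$ underlying $-(e)_{\tau U, \tau V} \circ \tau_{U,V} \colon \M(U, V) \to \tau V$, for each $V \in \M$.

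The round trip starting from $e$ is straightforward: from the definition one has $\eta_U = -(e) \circ \tau_{U,U}$ in $\sfM$, and the unit axiom \eqref{diag: dfn-functor} for $\tau$ reduces $\eta_U \circ j_U$ to $-(e) \circ j_{\tau U}$, which one recognizes as $e$ by unfolding the definition of $-(e)$ and the identity morphism $j_{\tau U}$ in the self-enrichment $\cS$, using the unit law $r_I = l_I$ from \Cref{rmk: r=l-I}. For the opposite direction, one reduces the required equality $\eta_V = -(\eta_U \circ j_U) \circ \tau_{U,V}$ on elements $f \in \sfM(U,V) = \sfS(I, \M(U,V))$ to the ordinary naturality of the underlying transformation $\eta$ applied to $f \colon U \to V$ in $\sfM$.

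The main obstacle, and the only place where $\sfS$-naturality (as opposed to ordinary naturality of the underlying data) plays a role, is verifying that the $\eta$ built from $e$ is genuinely $\sfS$-natural. I would express the $\sfS$-naturality square for $\eta$ in terms of the adjunction isomorphism $\pi$ from \Cref{dfn: int-hom}, transform it via \eqref{eqn: rule-binat} and \eqref{eqn: rule-mid-arg} into a diagram involving $\ev$, and then match both sides using the commuting composition square of \eqref{diag: dfn-functor} for $\tau$, the definition of composition in $\cS$ in terms of $\ev$, and the naturality of $-(e)$ in its codomain variable (\Cref{rmk: evaluation}.\ref{rmk: evaluation-nat}). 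The symmetry of $\sfS$ intervenes when commuting tensor factors in the translation between composition in $\M$ and composition in $\cS$, ultimately relying on $c_{I,I} = \id_{I \otimes I}$, see \Cref{rmk: r=l-I}.

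The final clause is an immediate specialization: setting $\tau := \M(U',-)$ yields $\sfS(I, \tau U) = \sfS(I, \M(U',U)) = \sfM(U',U)$, so that $e$ corresponds to some $f \colon U' \to U$ in $\sfM$. Tracing the definition $\eta_V = -(f) \circ \M(U',-)_{U,V}$ and applying the adjunction $\pi$ identifies this composite with the action $\M(f, V) \colon \M(U, V) \to \M(U', V)$ of the corepresented $\sfS$-functor $\M(-, V)$ on $f$, by the associativity of the enriched composition $M$.
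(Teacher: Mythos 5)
The paper does not actually give a proof of this theorem; it is cited from Kelly with a \qed. Judged on its own terms, your proposal has the right overall shape — define the two maps, check the two round trips, verify $\sfS$-naturality of the $\eta$ built from $e$, and specialize for the last clause. The round trip starting from $e$, the sketch for $\sfS$-naturality, and the final specialization are all plausible (though a couple of slips: $e := \eta_U \circ_\sfM j_U$ should read $\circ_\sfS$ since $\eta_U$ lives in the underlying category of $\cS$, not in $\sfM$; and the last clause uses the unit axiom and naturality of $\pi$, not associativity of $M$).

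The genuine gap is in the round trip $\eta \mapsto e \mapsto \eta'$. You propose to show $\eta_V = -(\eta_U \circ j_U) \circ \tau_{U,V}$ by checking equality after precomposition with every $f \in \sfS(I, \M(U,V))$, using ordinary naturality of the underlying $\eta$. But two morphisms $\M(U,V) \rightrightarrows \tau V$ in $\sfS$ that agree after precomposition with all morphisms out of $I$ need not be equal unless $I$ is a (weak) generator of $\sfS$, which is not among the hypotheses; and ordinary naturality is strictly weaker than $\sfS$-naturality precisely in this respect, see \Cref{rmk: underlying-nat}. The step therefore does not close. To repair it, one must use the full enriched naturality: apply \Cref{lem: nat-alt} to $\eta$ at the pair $(U,V)$ to get $\left[\eta_U, \tau V\right] \circ \tau_{U,V} = \left[\M(U,U), \eta_V\right] \circ \M(U,-)_{U,V}$ as morphisms $\M(U,V) \to \left[\M(U,U), \tau V\right]$ in $\sfS$, then postcompose both sides with $-(j_U)$. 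The left side becomes $-(\eta_U \circ j_U) \circ \tau_{U,V}$ by \Cref{rmk: evaluation}.\ref{rmk: evaluation-comp}; on the right, \Cref{rmk: evaluation}.\ref{rmk: evaluation-nat} pulls $\eta_V$ past the evaluation, and $-(j_U) \circ \M(U,-)_{U,V} = \id_{\M(U,V)}$ follows from the unit axiom of $\M$ together with \eqref{eqn: rule-mid-arg} and $i = \pi(r)$. This yields the required identity as an equality of morphisms in $\sfS$ with no assumption on $I$.
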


\begin{prp}[{\cite[\S1.6]{Kel05}}] \label{prp: Hom-underlying}
	Let $\M$ be an $\sfS$-category. For any, $U, W \in \M$, the functors underlying the $\sfS$-functors $\M(U, -)$ and $\M(-, W)$ are the partial functors of a bifunctor \[\M(-,-) \colon \op \sfM \times \sfM \to \sfS,\] which becomes $[-,-]$ for $\M=\cS$, see \Cref{rmk: self-enrichment}.
	\begin{enumerate}
		\pushQED{\qed}
		\item A morphism $g\colon V \to V'$ in $\sfM$ is assigned to
		\[
			\begin{tikzcd}
				\M(U, g) \colon \M(U, V) \ar[r, "l^{-1}"] & I \otimes \M(U, V) \ar[r, "g \otimes \id"] & \M(V, V') \otimes \M(U, V) \ar[r, "M"] & \M(U, V').
			\end{tikzcd}
		\]
		
		\item A morphism $f\colon V' \to V$ in $\sfM$ is assigned to
		\[
			\begin{tikzcd}
				\M(f, W) \colon \M(V, W) \ar[r, "r^{-1}"] & \M(V, W) \otimes I \ar[r, "\id \otimes f"] & \M(V, W) \otimes \M(V', V) \ar[r, "M"] & \M(V', W).
			\end{tikzcd} \qedhere
		\]
	\end{enumerate}
\end{prp}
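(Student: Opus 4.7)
The plan is to verify formulas (a) and (b), check the middle-four interchange so they assemble into a bifunctor, and identify the bifunctor with $[-,-]$ when $\M = \cS$. For (a), unfolding the Convention at the start of this section, the morphism $\M(U, g) \colon \M(U, V) \to \M(U, V')$ associated to $g \in \sfM(V, V') = \sfS(I, \M(V, V'))$ is the image of $g$ under $\sfS(l^{-1}, \M(U, V')) \circ \pi^{-1}_{I, \M(U, V), \M(U, V')} \circ \sfS(I, \M(U, -)_{V, V'})$. Since $\M(U, -)_{V, V'} = \pi(M)$ by definition of the $\sfS$-functor $\M(U, -)$, \eqref{eqn: rule-binat} (with $h = \id$) gives $\pi^{-1}(\pi(M) \circ g) = M \circ (g \otimes \id_{\M(U, V)})$, and precomposing with $l^{-1}$ yields the formula in (a).

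For (b), I apply (a) to the $\sfS$-category $\op \M$ with object $W$, noting that a morphism $f \colon V' \to V$ in $\sfM$ is the same data as a morphism $V \to V'$ in $\op \sfM$. Substituting $\op\M(W, V) = \M(V, W)$, $\op\M(V, V') = \M(V', V)$, $\op\M(W, V') = \M(V', W)$, and the $\op\M$-composition $M \circ c$, yields the composite $M \circ c \circ (f \otimes \id_{\M(V, W)}) \circ l^{-1}_{\M(V, W)}$. Naturality of $c$ gives $c \circ (f \otimes \id_{\M(V, W)}) = (\id_{\M(V, W)} \otimes f) \circ c_{I, \M(V, W)}$, and the symmetric-monoidal coherence identity $l_Y = r_Y \circ c_{I, Y}$ (a consequence of the triangle and hexagon axioms, in the spirit of \Cref{rmk: r=l-I}) converts the composite into the formula in (b).

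For bifunctoriality, I must check $\M(f, W') \circ \M(V, g) = \M(V', g) \circ \M(f, W)$ for $f \colon V' \to V$ and $g \colon W \to W'$ in $\sfM$. Expanding both sides via (a) and (b) gives two triple composites, each invoking $M$ twice; their equality is the middle-four interchange for composition in $\M$, and follows from the associativity axiom for $\M$ (relating the two bracketings of a threefold composite via the associator $a$) combined with naturality of $a$, $l$, $r$ and the triangle coherence axiom. For $\M = \cS$, formula (a) computes $\cS(X, g) = [X, g]$ by combining $M = \pi(\ev \circ (\id \otimes \ev) \circ a)$ with the defining property of $[X, g]$ as the mate of $g \circ \ev_{X, Y}$ under $\pi$, while formula (b) returns $\cS(f, Y) = [f, Y]$ directly via \eqref{eqn: rule-mid-arg}.

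I expect the bifunctoriality check to be the main obstacle: though conceptually it is just the middle-four interchange for composition in $\M$, it requires a careful deployment of the $\M$-associativity axiom alongside several coherence isomorphisms of $\sfS$ to bring both sides into a common normal form.
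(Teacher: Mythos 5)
The paper states this proposition without proof, citing \cite[\S1.6]{Kel05}, so there is no ``paper proof'' to compare against; your task was to supply what the citation delegates to Kelly, and your derivation does this correctly. The unfolding of (a) from the Convention at the start of the section together with \eqref{eqn: rule-binat} (specialized to $h=\id$) is exactly right. For (b), the reduction to (a) via $\op\M$ works, but note that it invokes the symmetry of $\sfS$ through the coherence identity $l = r \circ c_{I,-}$; this is consistent with the paper's conventions (the functor $\M(-,W)$ is itself introduced through $\op\M$, which presupposes symmetry), whereas Kelly's own treatment derives the formula in (b) directly from the Convention applied to $\M(-,W)$, without symmetry, which is the cleaner route if one wishes the proposition to hold for non-symmetric $\sfS$. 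The bifunctoriality (naturality square for $f$ and $g$) and the identification with $[-,-]$ when $\M = \cS$ are sketched rather than carried out, but the tools you name --- the $\M$-associativity axiom, naturality of $a$, $l$, $r$, the triangle coherence $l_{X\otimes Y} \circ a_{I,X,Y} = l_X \otimes \id_Y$, and \eqref{eqn: rule-mid-arg} --- are precisely the ones that make those diagram chases close, so the plan is sound; the naturality check in particular is a standard but lengthy calculation, and it would be worth writing it out at least once to make sure the associator does land where you expect.
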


\begin{rmk} \label{rmk: comp-j}
	For any object $U \in \M$ of an $\sfS$-category $\M$ and any morphism $f\colon U \to U'$ in the underlying category $\sfM$, we have
	\[\M(U, f) \circ_\sfS j_U = f \circ_\sfM \id_U = f,\]
	due to \Cref{prp: Hom-underlying} and the naturality of $l$: 
	\begin{center}
		\begin{tikzcd}[row sep={15mm,between origins}, column sep={20mm,between origins}]
			M(U, U) \ar[rd, "l^{-1}"] \ar[rrrr, "\M(U{,}\,f)"] &&&& \M(U, U') \ar[ddd, equal] \\
			& I \otimes \M(U, U) \ar[rr, "f \otimes \id"] && \M(U, U') \otimes \M(U, U) \ar[ru, "M"] \ar[d, equal] \\
			& I \otimes I \ar[u, "\id \otimes j_U"] \ar[rr, "f \otimes j_U"] && \M(U, U') \otimes \M(U, U) \ar[rd, "M"'] \\
			I \ar[ru, "l^{-1}"'] \ar[uuu, "j_U"] \ar[rrrr, "f \, \circ_\sfM \, \id_U"] &&&& \M(U, U')
		\end{tikzcd}
	\end{center}
	Dually, we have $\M(f, U') \circ_\sfS j_U = \id_{U'} \circ_\sfM f = f$ in $\sfS$, due to the naturality of $r$.
\end{rmk}

\begin{lem} \label{lem: functor-natural}
	Any $\sfS$-functor $\tau \colon \M \to \N$ yields a natural transformation \[\tau \colon \M(-,-) \to \N(- , -) \circ (\tau \times \tau)\] of bifunctors $\op \sfM \times \sfM \to \sfS$, that is, for any two morphisms $f \colon W \to X$ and $g \colon Y \to Z$ in $\sfM$, there is the following commutative diagram in $\sfS$:
	\begin{center}
		\begin{tikzcd}[row sep={17.5mm,between origins}, column sep={27.5mm,between origins}]
			\M(X, Y) \ar[r, "\M(f{,}\, g)"] \ar[d, "\tau_{X,Y}"] & \M(W, Z) \ar[d, "\tau_{W,Z}"] \\
			\N(\tau X, \tau Y) \ar[r, "\N(\tau f {,}\, \tau g)"] & \N(\tau W, \tau Z)
		\end{tikzcd}
	\end{center}
\end{lem}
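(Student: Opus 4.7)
The plan is to reduce the naturality square to two simpler, partial naturality squares — one in each variable — and then verify each by unfolding the explicit formulas from \Cref{prp: Hom-underlying} and invoking the compositivity axiom \eqref{diag: dfn-functor} for the $\sfS$-functor $\tau$.

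First, since $\M(-,-)$ is a bifunctor (\Cref{prp: Hom-underlying}), we have the factorization $\M(f, g) = \M(f, Z) \circ \M(X, g)$ in $\sfS$, and similarly $\N(\tau f, \tau g) = \N(\tau f, \tau Z) \circ \N(\tau X, \tau g)$. Thus the desired square decomposes as the pasting of the two squares
\begin{center}
\begin{tikzcd}[row sep={14mm,between origins}, column sep={24mm,between origins}]
\M(X, Y) \ar[r, "\M(X{,}\,g)"] \ar[d, "\tau_{X,Y}"'] & \M(X, Z) \ar[r, "\M(f{,}\,Z)"] \ar[d, "\tau_{X,Z}"] & \M(W, Z) \ar[d, "\tau_{W,Z}"] \\
\N(\tau X, \tau Y) \ar[r, "\N(\tau X{,}\,\tau g)"] & \N(\tau X, \tau Z) \ar[r, "\N(\tau f{,}\,\tau Z)"] & \N(\tau W, \tau Z),
\end{tikzcd}
\end{center}
and it suffices to prove that each of them commutes.

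For the left square, I would substitute the explicit description of $\M(X, g)$ from \Cref{prp: Hom-underlying} and compute
\[
\tau_{X,Z} \circ \M(X, g) \;=\; \tau_{X,Z} \circ M_\M \circ (g \otimes \id_{\M(X,Y)}) \circ l^{-1}_{\M(X,Y)}.
\]
Applying the first diagram in \eqref{diag: dfn-functor} replaces $\tau_{X,Z} \circ M_\M$ by $M_\N \circ (\tau_{Y,Z} \otimes \tau_{X,Y})$. Rewriting $(\tau_{Y,Z} \otimes \tau_{X,Y}) \circ (g \otimes \id_{\M(X,Y)}) = ((\tau_{Y,Z} \circ g) \otimes \id) \circ (\id_I \otimes \tau_{X,Y}) = (\tau g \otimes \id) \circ (\id_I \otimes \tau_{X,Y})$, and then using the naturality of $l^{-1}$ to pull $\tau_{X,Y}$ all the way to the right, the composite becomes $M_\N \circ (\tau g \otimes \id) \circ l^{-1}_{\N(\tau X,\tau Y)} \circ \tau_{X,Y}$, which is exactly $\N(\tau X, \tau g) \circ \tau_{X,Y}$ by \Cref{prp: Hom-underlying} applied to $\N$.

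The right square is dual: swap the roles of the two arguments, use the formula for $\M(f,Z)$ involving $r^{-1}$ instead of $l^{-1}$, and proceed by the same three ingredients (compositivity axiom for $\tau$, bilinearity of $\otimes$, naturality of $r$). No real obstacle is expected; the main care is bookkeeping the unit isomorphisms and ensuring that the $\sfS$-morphism $g\colon I \to \M(Y,Z)$ is consistently identified with the underlying morphism $g \in \sfM(Y,Z)$, so that $\tau g = \tau_{Y,Z} \circ g$ appears correctly after invoking \eqref{diag: dfn-functor}.
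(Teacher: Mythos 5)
Your proof is correct and follows essentially the same strategy as the paper: both reduce the naturality square to the two partial squares by bifunctoriality of $\M(-,-)$, and both verify the partial square by unfolding the formulas from \Cref{prp: Hom-underlying}, invoking the compositivity axiom \eqref{diag: dfn-functor}, the bifunctoriality of $\otimes$, and the naturality of the unit isomorphism. The paper merely organizes the same computation into a single commutative diagram rather than a chain of equalities.
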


\begin{proof} Since $\M(-,-)$ and $\N(-,-)$ are bifunctors, see \Cref{prp: Hom-underlying}, we may assume that $f$ or $g$ is the identity. If $f = \id_X$, the diagram expands to
	\begin{center}
		\begin{tikzcd}[row sep={17.5mm,between origins}, column sep={22.5mm,between origins}]
			\M(X, Y) \ar[rd, "l^{-1}"] \ar[ddd, "\tau_{X,Y}"] \ar[rrrr, "\M(X{,}\, g)"] &&&& \M(X, Z) \ar[ddd, "\tau_{X,Z}"] \\
			& I \otimes \M(X, Y) \ar[rr, "g \otimes \id"] \ar[d, "\id \otimes \tau_{X,Y}"] && \M(Y, Z) \otimes \M(X, Y) \ar[ru, "M"] \ar[d, "\tau_{Y, Z} \otimes \tau_{X,Y}"]  \\
			& I \otimes \N(\tau X, \tau Y) \ar[rr, "\tau g \otimes \id"] && \N(\tau Y, \tau Z) \otimes \N(\tau X, \tau Y) \ar[rd, "M"] \\
			\N(\tau X, \tau Y) \ar[ru, , "l^{-1}"] \ar[rrrr, "\N(\tau X {,}\, \tau g)"] &&&& \N(\tau X, \tau Z).
		\end{tikzcd}
	\end{center}
	The left trapezoid commutes since $l$ is natural, the right one due to \eqref{diag: dfn-functor}, the upper and the lower one commute by \Cref{prp: Hom-underlying}. The interior square commutes, since $\otimes$ is a bifunctor, see \Cref{dfn: underlying-functor}. The second case is dual.
\end{proof}

\begin{prp}[{\cite[\S1.7]{Kel05}}] \label{lem: nat-alt}
	Let $\sigma, \tau \colon \M \to \N$ be $\sfS$-functors. A collection of morphisms $\eta_U \in \sfN(\sigma U, \tau U)$, where $U \in \M$, forms an $\sfS$-natural transformation $\eta \colon \sigma \to \tau$ if and only if the diagram
	\begin{center}
		\begin{tikzcd}[row sep={17.5mm,between origins}, column sep={35mm,between origins}]
			\M(U, V) \ar[r, "\sigma_{U, V}"] \ar[d, "\tau_{U, V}"'] & \N(\sigma U, \sigma V) \ar[d, "\N(\sigma U{,}\, \eta_V)"] \\
			\N(\tau U, \tau V) \ar[r, "\N(\eta_U{,}\, \tau V)"]& \N(\sigma U, \tau V)
		\end{tikzcd} 
	\end{center}
	in $\sfS$ commutes, for all $U, V \in \M$. \qed
\end{prp}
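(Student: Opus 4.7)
The plan is to reduce the reformulated square to Kelly's original $\sfS$-naturality axiom \cite[(1.7)]{Kel05} by expanding the maps $\N(\sigma U,\eta_V)$ and $\N(\eta_U,\tau V)$ via \Cref{prp: Hom-underlying} and using naturality of the unit isomorphisms $l$ and $r$.

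First, I would recall that Kelly's axiom \cite[(1.7)]{Kel05} for a collection $\eta=(\eta_U)_{U\in\M}$ to be an $\sfS$-natural transformation $\sigma\to\tau$ states that the two composites
\[
\M(U,V)\xrightarrow{l^{-1}} I\otimes\M(U,V)\xrightarrow{\eta_V\otimes\sigma_{U,V}}\N(\sigma V,\tau V)\otimes\N(\sigma U,\sigma V)\xrightarrow{M}\N(\sigma U,\tau V)
\]
and
\[
\M(U,V)\xrightarrow{r^{-1}}\M(U,V)\otimes I\xrightarrow{\tau_{U,V}\otimes\eta_U}\N(\tau U,\tau V)\otimes\N(\sigma U,\tau U)\xrightarrow{M}\N(\sigma U,\tau V)
\]
agree, where I view $\eta_U\in\sfN(\sigma U,\tau U)$ as a morphism $I\to\N(\sigma U,\tau U)$ in $\sfS$.

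Next, I would rewrite the proposed square. By \Cref{prp: Hom-underlying},
\[
\N(\sigma U,\eta_V)=M\circ(\eta_V\otimes\id)\circ l^{-1}\quad\text{and}\quad\N(\eta_U,\tau V)=M\circ(\id\otimes\eta_U)\circ r^{-1}.
\]
Precomposing the first with $\sigma_{U,V}$ and the second with $\tau_{U,V}$, then moving $\sigma_{U,V}$ and $\tau_{U,V}$ past the unit isomorphisms using naturality of $l$ and $r$, and rearranging using the bifunctoriality of $\otimes$ (so that $(\eta_V\otimes\id)\circ(\id\otimes\sigma_{U,V})=\eta_V\otimes\sigma_{U,V}$, and analogously on the other side), recovers precisely the two composites displayed above. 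Hence the commutativity of the proposed square coincides term-by-term with Kelly's axiom.

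The argument is entirely formal, so there is no real obstacle; the only point requiring care is ensuring that the contravariant slot of $\N(-,-)$ is expanded via $r^{-1}$ and the covariant slot via $l^{-1}$, as dictated by \Cref{prp: Hom-underlying}, so that the orientations of the tensor factors match Kelly's axiom on the nose. Both directions of the equivalence follow at once, since each reduction step is an equality of morphisms in $\sfS$.
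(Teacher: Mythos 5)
Your proof is correct and matches the standard argument from the cited source; the paper itself gives no proof for this proposition but just cites Kelly, \S1.7, and your expansion of $\N(\sigma U,\eta_V)$ and $\N(\eta_U,\tau V)$ via \Cref{prp: Hom-underlying}, followed by naturality of $l$, $r$ and the interchange law for $\otimes$, is exactly the term-by-term reduction to Kelly's axiom (1.7).
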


\begin{lem} \label{lem: iso-components}
	An $\sfS$-natural transformation $\eta \colon \sigma \to \tau$ of $\sfS$-functors $\sigma, \tau \colon \M \to \N$ is an $\sfS$-natural isomorphism if and only if $\eta_U \colon \sigma U \to \tau U$ is an isomorphism in $\sfN$, for all $U \in \M$.
\end{lem}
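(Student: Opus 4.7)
The forward direction is immediate: if $\eta$ admits an $\sfS$-natural inverse $\theta \colon \tau \to \sigma$, then the equations $\theta\eta = \id_\sigma$ and $\eta\theta = \id_\tau$ of $\sfS$-natural transformations hold componentwise in $\sfN$ by the definition of vertical composition, so each $\eta_U$ is an isomorphism with inverse $\theta_U$. So the content lies in the converse: given that each $\eta_U \colon \sigma U \to \tau U$ is invertible in $\sfN$, I want to promote the componentwise inverses $\theta_U := \eta_U^{-1}$ to an $\sfS$-natural transformation $\theta \colon \tau \to \sigma$. Once $\sfS$-naturality of $\theta$ is verified, the identities $\theta\eta = \id_\sigma$ and $\eta\theta = \id_\tau$ hold componentwise by construction, hence as $\sfS$-natural transformations.

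The key idea is to use the reformulation of $\sfS$-naturality from \Cref{lem: nat-alt}, which expresses the condition purely in terms of the bifunctor $\N(-,-) \colon \op\sfM \times \sfM \to \sfS$ studied in \Cref{prp: Hom-underlying}. Concretely, I need to verify
\[
\N(\tau U, \theta_V) \circ \tau_{U,V} \;=\; \N(\theta_U, \sigma V) \circ \sigma_{U,V}
\]
starting from the known identity
\[
\N(\eta_U, \tau V) \circ \tau_{U,V} \;=\; \N(\sigma U, \eta_V) \circ \sigma_{U,V}.
\]

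The main step, and the only non-routine point, is the following observation: since $\N(-,-)$ is a bifunctor, it preserves isomorphisms in each argument, and in fact $\N(\eta_U, \tau V)$ has inverse $\N(\theta_U, \tau V)$ while $\N(\sigma U, \eta_V)$ has inverse $\N(\sigma U, \theta_V)$ in $\sfS$. Precomposing the naturality square for $\eta$ with $\N(\theta_U, \tau V)$ on the left, then rewriting $\N(\theta_U, \tau V) \circ \N(\sigma U, \eta_V) = \N(\tau U, \eta_V) \circ \N(\theta_U, \sigma V)$ by bifunctoriality, and finally precomposing with $\N(\tau U, \theta_V)$ collapses $\N(\tau U, \theta_V) \circ \N(\tau U, \eta_V) = \id_{\N(\tau U, \sigma V)}$ and yields the desired identity.

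The main obstacle I anticipate is bookkeeping: keeping track of the variances of the two arguments of $\N(-,-)$ and verifying that the bifunctoriality rearrangement $\N(\theta_U, \tau V) \circ \N(\sigma U, \eta_V) = \N(\tau U, \eta_V) \circ \N(\theta_U, \sigma V)$ is applied with the correct sources and targets. Everything else is formal manipulation once the criterion of \Cref{lem: nat-alt} is invoked.
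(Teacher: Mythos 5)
Your proof is correct and takes essentially the same route as the paper: both reduce to checking the criterion of \Cref{lem: nat-alt} for the pointwise inverse $\theta$, and both obtain it from the corresponding criterion for $\eta$ via cancellation of $\theta_U\eta_U$ and $\theta_V\eta_V$ together with bifunctoriality of $\N(-,-)$. The paper packages this as a single commutative diagram (two cancellation triangles plus one bifunctoriality square) whereas you traverse the bifunctoriality square in the other direction and spell it out equationally, but the content is identical.
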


\begin{proof}
	For each $U \in \M$, let $\theta_U \colon \tau U \to \sigma U$ denote the inverse of $\eta_U$ in $\sfN$. For any $U, V \in \M$, consider the following diagram in $\sfS$:
	\begin{center}
		\begin{tikzcd}[row sep={17.5mm,between origins}, column sep={35mm,between origins}]
			\M(U, V) \ar[r, "\tau_{U, V}"] \ar[d,"\sigma_{U, V}"'] & \N(\tau U, \tau V) \ar[d, "\N(\eta_U{,}\, \tau V)"'] \ar[rd, equal] & \\
			\N(\sigma U, \sigma V) \ar[r, "\N(\sigma U{,}\,\eta_V)"] \ar[dr, equal] & \N(\sigma U, \tau V) \ar[r, "\N(\theta_U{,}\, \tau V)"'] \ar[d, "\N(\sigma U{,}\,\theta_V)"] & \N(\tau U, \tau V) \ar[d, "\N(\tau U{,}\, \theta_V)"] \\
			& \N(\sigma U, \sigma V) \ar[r, "\N(\theta_U{,}\,\tau V)"'] & \N(\tau U, \sigma V)
		\end{tikzcd}
	\end{center}
	The upper square commutes due to the $\sfS$-naturality of $\eta$, see \Cref{lem: nat-alt}, the lower one since $\N(-,-)$ is a bifunctor, see \Cref{prp: Hom-underlying}, the triangles by definition of $\theta$. \Cref{lem: nat-alt} applied to the outer hexagon yields the claim.
\end{proof}

\begin{lem} \label{lem: j-extranat}
	Let $\sfS$ be a symmetric closed monoidal category and $\M$ an $\sfS$-category. For any morphism $f \colon X \to Y$ in $\sfM$, there is the following commutative diagram in $\sfS$:
	\begin{center}
		\begin{tikzcd}[row sep={17.5mm,between origins}, column sep={25mm,between origins}]
			I \ar[r, "j_Y"] \ar[d, "j_X"] & \M(Y, Y) \ar[d, "\M(f{,}\, Y)"] \\
			\M(X, X) \ar[r, "\M(X{,}\, f)"] & \M(X, Y)
		\end{tikzcd}
	\end{center} 
\end{lem}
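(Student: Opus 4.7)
The plan is to recognize that both composites in the square compute the same morphism $f \colon I \to \M(X, Y)$ in $\sfS$. Under the identification $\sfM(X, Y) = \sfS(I, \M(X, Y))$ from \Cref{dfn: underlying-cat}, the given morphism $f$ of the underlying category $\sfM$ is literally a morphism $I \to \M(X, Y)$ in $\sfS$, and I expect each of $\M(f, Y) \circ j_Y$ and $\M(X, f) \circ j_X$ to reduce to it.

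First, I would invoke \Cref{rmk: comp-j} directly: that remark shows $\M(X, f) \circ_\sfS j_X = f \circ_\sfM \id_X = f$, which handles the left-bottom path of the square. For the top-right path, the dual part of the same remark, applied with $U = X$ and $U' = Y$, yields $\M(f, Y) \circ_\sfS j_Y = \id_Y \circ_\sfM f = f$. Combining these two identities immediately gives
\[
\M(f, Y) \circ j_Y \;=\; f \;=\; \M(X, f) \circ j_X,
\]
which is exactly the commutativity in question.

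Since the entire computational content is already packaged in \Cref{rmk: comp-j}, itself a diagram chase using the naturality of $l$ and $r$ together with the unit axioms of the $\sfS$-category $\M$, there is no substantial new obstacle. Conceptually, the lemma is best viewed as exhibiting the extranaturality of the identity elements $(j_U)_{U \in \M}$ in the object variable, which is reflected both in the symmetric roles of $X$ and $Y$ in the square and in the chosen label of the statement.
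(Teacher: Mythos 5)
Your proof is correct, and it takes a genuinely shorter route than the paper's. The paper proves the lemma by an explicit six-row diagram chase that unfolds the definitions of $\M(X, f)$ and $\M(f, Y)$ from \Cref{prp: Hom-underlying}, uses the naturality of $l$ and $r$, and in particular invokes $c_{I,I} = \id_{I \otimes I}$ (via \Cref{rmk: r=l-I}) to glue the two halves together in the middle. You instead notice that \Cref{rmk: comp-j} already computes both composites around the square: its first half gives $\M(X, f) \circ_\sfS j_X = f$, and its dual half gives $\M(f, Y) \circ_\sfS j_Y = f$ (you correctly read the dual statement as $\M(f, U') \circ_\sfS j_{U'} = f$; the printed $j_U$ there is a typo, since $\M(f, U')$ has domain $\M(U', U')$). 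Since both paths equal the morphism $f \in \sfS(I, \M(X, Y))$, the square commutes. Your reduction makes the conceptual content crisper — the lemma is indeed two instances of the ``$j$ composed with a one-variable hom-functor gives the morphism itself'' identity — and incidentally shows the symmetry hypothesis plays no essential role. The paper's version buys explicitness and avoids relying on the dual half of the remark (which it only sketches), but carries the small cost of re-deriving what \Cref{rmk: comp-j} already records.
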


\begin{proof}
	The following diagram commutes due to \Cref{prp: Hom-underlying} and various axioms, see also \Cref{rmk: r=l-I}:
	\begin{center}
		\begin{tikzcd}[row sep={17.5mm,between origins}, column sep={35mm,between origins}]
			\M(Y, Y) \ar[rd, "r^{-1}"] \ar[rrr, "\M(f{,}\, Y)"] &&& \M(X, Y) \\
			 I \ar[rd, "r^{-1}"] \ar[u, "j_Y"] \ar[ddd, equal] & \M(Y, Y) \otimes I \ar[r, "\id \otimes f"] & \M(Y, Y) \otimes \M(X, Y) \ar[ru, "M"] & \M(X, Y) \ar[ddd, equal] \ar[u, equal] \\
			& I \otimes I \ar[r, "\id \otimes f"] \ar[u, "j_Y \otimes \id"] \ar[d, "\id_{I \otimes I} \, = \, c"'] & I \otimes \M(X, Y) \ar[ru, "l"] \ar[u, "j_Y \otimes \id"] \ar[d, "c"'] & \\
			& I \otimes I \ar[r, "f \otimes \id"] \ar[d, "\id \otimes j_X"'] & \M(X, Y) \otimes I \ar[rd, "r"] \ar[d, "\id \otimes j_X"'] & \\
			I \ar[ru, "l^{-1}"] \ar[d, "j_X"'] & I \otimes \M(X, X) \ar[r, "f \otimes \id"] & \M(X, Y) \otimes \M(X, X) \ar[rd, "M"] & \M(X, Y) \ar[d, equal] \\
			M(X, X) \ar[rrr, "\M(X{,}\, f)"] \ar[ru, "l^{-1}"] &&& \M(X, Y)
		\end{tikzcd}
	\end{center}
	Then the commutativity of the outer square yields the claim.
\end{proof}

\section{Bondal--Kapranov for triangulated functors}

In this section, $\sfS$ is a closed additive monoidal category with self-enrichment $\cS$. This endows the underlying category of any $\sfS$-category with a fixed preadditive structure, see \Cref{dfn: underlying-cat}.

\begin{dfn} \label{dfn: enriched-triang} Let $\M$ and $\N$ be $\sfS$-categories with triangulated underlying categories $\sfM$ and $\sfN$.
	\begin{enumerate}
		\item We call an $\sfS$-functor $\tau \colon \M \to \N$ \textbf{triangulated} if its underlying functor $\tau \colon \sfM \to \sfN$ is triangulated.
		
		\item Suppose that $\sfS$ is triangulated. We call $\M$ an \textbf{$\sfS$-triangulated category} if, for any $U \in \M$, the $\sfS$-functors $\M(U,-)\colon \M \to \cS$ and $\M(-, U)\colon \op \M \to \cS$ (co)represented by $U$ are triangulated.
	\end{enumerate}
\end{dfn}

\begin{dfn} \label{dfn: enriched-sub}
	For an $\sfS$-category $\M$, any subset $\ob \U$ of $\ob \M$ gives rise to an \textbf{$\sfS$-subcategory} $\U$ of $\M$ by restricting $\M(-,-)$, $M$, and $j$. Its underlying category $\sfU$ is the full subcategory of the underlying category $\sfM$ with objects $\ob \sfU = \ob \U$.\\
	If $\M$ is $\sfS$-triangulated and $\sfU$ a triangulated subcategory of $\sfM$, we call $\U$ an \textbf{$\sfS$-triangulated subcategory} of $\M$. In particular, $\U$ is an $\sfS$-triangulated category in this case.
\end{dfn}

\begin{dfn}
	Let $\M$ and $\N$ be $\sfS$-categories and $\U$ an $\sfS$-subcategory of $\M$.
	\begin{enumerate}
		\item For any $\sfS$-functor $\tau \colon \M \to \N$, there is a \textbf{restricted} $\sfS$-functor $\tau \vert_\U \colon \U \to \N$ defined by $\tau \vert_\U U := \tau U$ and $\left(\tau \vert_\U\right)_{U, V} := \tau_{U, V}$, for $U, V \in \U$.
		
		\item For any $\sfS$-natural transformation $\eta = (\eta_U)_{U \in \M} \colon \sigma \to \tau$ of $\sfS$-functors $\sigma, \tau \colon \M \to \N$, there is a \textbf{restricted} $\sfS$-natural transformation $\eta\vert_\U := (\eta_U)_{U \in \U} \colon \sigma\vert_\U \to \tau \vert_\U$.
	\end{enumerate}
	
\end{dfn}

\begin{dfn} \label{dfn: SOD}
	Let $\U$ and $\V$ be $\sfS$-triangulated subcategories of $\M$ with underlying triangulated subcategories $\sfU$ and $\sfV$ of $\sfM$. We call the pair $(\U, \V)$ a \textbf{semiorthogonal decomposition} of $\M$ if $\M(\U, \V)=0$ and $\sfM = \sfU \ast \sfV$, that is:
	\begin{enumerate}
		\item The object $\M(U, V)$ of $\sfS$ is zero, for all $U \in \U$ and $V \in \V$.
		
		\item Any $X \in \M$ fits into a distinguished triangle $U \to X \to V$ in $\sfM$ with $U \in \U$ and $V \in \V$.
	\end{enumerate}
\end{dfn}

\begin{rmk} \label{rmk: Hom-nat-iso}
	Consider $\sfS$-subcategories $\U$ and $\V$ of an $\sfS$-triangulated category $\M$ such that $\M(\U, \V)=0$. Due to \Cref{thm: Yoneda} and \Cref{lem: iso-components}, any distinguished triangle $U \xrightarrow{u} X \xrightarrow{v} V$ in $\sfM$ with $U \in \U$ and $V \in \V$ then yields $\sfS$-natural isomorphisms
	\[\M(-, u)\vert_{\U} \colon \U(-, U) \xrightarrow{\cong} \M(-, X)\vert_{\U} \hspace{5mm} \text{ and } \hspace{5mm} \M(v, -)\vert_{\V} \colon \V(V, -) \xrightarrow{\cong} \M(X, -)\vert_{\V}.\]
\end{rmk}

\begin{thm} \label{thm: BK}
	Let $\sfS$ be a symmetric closed additive monoidal and triangulated category.\footnote{We do not assume any compatibility between the triangulation and the monoidal structure.} Consider an $\sfS$-triangulated category $\M$ with a semiorthogonal decomposition $(\U, \V)$. Suppose that all triangulated $\sfS$-functors $\U \to \cS$ and $\V \to \cS$ are $\sfS$-representable. Then any triangulated $\sfS$-functor $\M \to \cS$ is $\sfS$-representable.
\end{thm}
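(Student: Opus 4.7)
My plan is to adapt the classical Bondal--Kapranov construction to the enriched setting: build $W \in \M$ as the cone of a morphism $\xi \colon V \to U[1]$ in $\sfM$, and verify that $W$ represents $\tau$ via a five-lemma argument on the semiorthogonal decomposition.

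By the representability hypothesis, fix $U \in \U$ and $V \in \V$ together with $\sfS$-natural isomorphisms $\eta \colon \M(U, -)\vert_\U \xrightarrow{\cong} \tau\vert_\U$ and $\theta \colon \M(V, -)\vert_\V \xrightarrow{\cong} \tau\vert_\V$. Via the weak enriched Yoneda lemma (\Cref{thm: Yoneda}), these correspond to elements $a \in \sfS(I, \tau U)$ and $b \in \sfS(I, \tau V)$, and extend canonically to $\sfS$-natural transformations $\M(U, -) \to \tau$ and $\M(V, -) \to \tau$ on all of $\M$. The main construction is a compatible pair $(\xi, c)$ consisting of a morphism $\xi \colon V \to U[1]$ in $\sfM$ and an element $c \in \sfS(I, \tau W)$, where $W$ is the cone of $\xi[-1] \colon V[-1] \to U$ in $\sfM$. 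Since $\tau$ is triangulated, the triangle $U \xrightarrow{u} W \xrightarrow{v} V \xrightarrow{\xi} U[1]$ maps to a distinguished triangle in $\sfS$, and applying $\sfS(I, -)$ yields a long exact sequence. By $\sfS$-naturality of $\theta$ (\Cref{lem: nat-alt}) and \Cref{rmk: comp-j}, one computes $\tau\xi \circ b = \theta_{U[1]} \circ \xi$; choose $\xi$ so this vanishes, lift $b$ to some $c$ under $\tau v_\ast$, and adjust $c$ modulo the image of $\tau u_\ast$ so that $c$ also encodes $a$ via $\tau u$. Constructing this compatible pair $(\xi, c)$ is the main obstacle.

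Granted $c$, the enriched Yoneda lemma produces an $\sfS$-natural transformation $\phi \colon \M(W, -) \to \tau$. By \Cref{lem: iso-components}, verifying $\phi$ is an $\sfS$-natural isomorphism reduces to showing each $\phi_Z$ is an isomorphism in $\sfS$. Any $Z \in \M$ fits in a triangle $U_Z \to Z \to V_Z$ with $U_Z \in \U$ and $V_Z \in \V$; the triangulated five-lemma in $\sfS$, applied to the distinguished triangles obtained from $\M(W, -)$ and $\tau$ linked by $\phi$, reduces to the cases $Z \in \U$ and $Z \in \V$. For $Z \in \V$, applying \Cref{rmk: Hom-nat-iso} to the triangle $U \to W \to V$ (using $\M(\U, \V) = 0$) gives $\M(W, -)\vert_\V \cong \M(V, -)\vert_\V$, and the construction of $c$ ensures $\phi\vert_\V$ corresponds to $\theta$ under this iso. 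For $Z \in \U$, the direct triangle argument fails since $\M(\V, \U)$ need not vanish; instead, the hypothesis that every triangulated $\sfS$-functor $\U \to \cS$ is $\sfS$-representable ensures $\M(W, -)\vert_\U$ is represented by some $W' \in \U$, and the construction of $(\xi, c)$ is arranged so that $W' \cong U$ (by tracking the Yoneda element $a$), giving $\phi\vert_\U$ as an $\sfS$-natural isomorphism.
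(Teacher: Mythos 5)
Your overall shape — build a representing object $W$, get the $\sfS$-natural transformation via \Cref{thm: Yoneda}, then reduce to $\U$ and $\V$ via \Cref{lem: iso-components} and the five lemma — matches the paper's. But you name the gap yourself: \enquote{Constructing this compatible pair $(\xi, c)$ is the main obstacle}, and that gap is real, and I do not think your proposed construction can close it. If $W$ is the cone of $\xi[-1]\colon V[-1] \to U$ with $U$ fixed as the $\U$-representative of $\tau$, then applying $\M(-, Z)$ for $Z \in \U$ to the triangle $U \xrightarrow{u} W \to V \xrightarrow{\xi} U[1]$ produces $\cdots \to \M(V, Z) \to \M(W, Z) \to \M(U, Z) \to \M(V[-1], Z) \to \cdots$ in $\sfS$, and semiorthogonality only kills $\M(\U, \V)$, not $\M(\V, \U)$; so the flanking terms need not vanish and $\M(W,-)\vert_\U$ is in general \emph{not} represented by $U$ via $u$, for any choice of $\xi$. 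The representing object $W'$ for $\M(W,-)\vert_\U$ is therefore determined independently of your freedom in $(\xi, c)$, and the parenthetical \enquote{arrange $W' \cong U$ by tracking $a$} has no mechanism behind it.

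The paper closes exactly this gap with an extra representability step you are missing: it applies the hypothesis not only to $\tau\vert_\U$ but also to the auxiliary triangulated functor $\M(\tilde V,-)\vert_\U \colon \U \to \cS$, producing $U' \in \U$, an $\sfS$-natural isomorphism $\theta \colon \U(U',-) \cong \M(\tilde V,-)\vert_\U$, and its Yoneda element $v \colon \tilde V \to U'$. Together with $u \colon \tilde U \to U'$ built from the compatibility of $\eta^\U$ and $\eta^\V$, this yields the triangle $\tilde X \to \tilde U \oplus \tilde V \to U'$ and a homotopy cartesian square, \emph{not} a cone of a map $\tilde V[-1] \to \tilde U$. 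The resulting $\tilde X$ sits in a triangle $U'' \to \tilde X \to \tilde V$ with $U'' \neq \tilde U$ in general, and $\M(\tilde X,-)\vert_\U$ is represented by $\tilde U$ via the map $p \colon \tilde X \to \tilde U$ going the opposite direction to your $u \colon U \to W$. That reversal of the arrow is precisely the structural feature your direct cone construction cannot produce, and it is why the homotopy cartesian detour through $U'$ is necessary.
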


\begin{proof} Let $\tau \colon \M \to \cS$ be a triangulated $\sfS$-functor. We proceed in several steps:
	\begin{enumerate}[wide, label=(\arabic*)]
		\item \label{thm: BK-start} By assumption, there are objects $\tilde U, U' \in \U$ and $\tilde V \in \V$, and $\sfS$-natural isomorphisms
		\begin{align*}
			\eta^\U & =(\eta^\U_U)_{U \in \U}\colon \U(\tilde U, -) \xrightarrow{\cong} \tau\vert_\U, \\
			\eta^\V & =(\eta^\V_V)_{V \in \V}\colon \V(\tilde V, -) \xrightarrow{\cong} \tau\vert_\V, \\
			\theta & =(\theta_U)_{U \in \M} \colon \U(U', -) \xrightarrow{\cong} \M(\tilde V, -)\vert_\U.
		\end{align*}
		By \Cref{thm: Yoneda}, these correspond to $e_{\U} \in \sfS(I, \tau \tilde U)$, $e_{\V} \in \sfS(I, \tau \tilde V)$, and $v \in \sfS(I, \M(\tilde V, U'))=\sfM(\tilde V, U')$ via the following commutative diagrams in $\sfS$:
		\begin{gather} \label{diag: def-e_U}
			\begin{tikzcd}[sep={17.5mm,between origins}, ampersand replacement=\&]
				I \ar[rr, dashed, "e_\U"] \ar[rd, "j_{\tilde U}"'] \&\& \tau \tilde U \\
				\& \M(\tilde U, \tilde U) \ar[ru, "\eta^\U_{\tilde U}"']
			\end{tikzcd}
			\hspace{1cm}
			\begin{tikzcd}[sep={17.5mm,between origins}, ampersand replacement=\&]
				\M(\tilde U, U) \ar[rr, "\eta^\U_U"] \ar[rd, "\tau_{\tilde U, U}"'] \&\& \tau U \\
				\& \left[\tau \tilde U, \tau U \right] \ar[ru, "-(e_\U)"']
			\end{tikzcd}
		\end{gather}
			
		\begin{gather} \label{diag: def-e_V}
			\begin{tikzcd}[sep={17.5mm,between origins}, ampersand replacement=\&]
				I \ar[rr, dashed, "e_\V"] \ar[rd, "j_{\tilde V}"'] \&\& \tau \tilde V \\
				\& \M(\tilde V, \tilde V) \ar[ru, "\eta^\V_{\tilde V}"']
			\end{tikzcd}
			\hspace{1cm}
			\begin{tikzcd}[sep={17.5mm,between origins}, ampersand replacement=\&]
				\M(\tilde V, V) \ar[rr, "\eta^\V_V"] \ar[rd, "\tau_{\tilde V, V}"'] \&\& \tau V \\
				\& \left[\tau \tilde V, \tau V \right] \ar[ru, "-(e_\V)"']
			\end{tikzcd}
		\end{gather}
		
		\begin{gather} \label{diag: def-v}
			\begin{tikzcd}[sep={17.5mm,between origins}, ampersand replacement=\&]
				I \ar[rr, dashed, "v"] \ar[rd, "j_{U'}"'] \&\& \M(\tilde V, U') \\
				\& \M(U', U') \ar[ru, "\theta_{U'}"']
			\end{tikzcd}
		\end{gather}
		
		\item \label{thm: BK-def-u} Define $u \in \sfS(I, \U(\tilde U, U')) = \sfM(\tilde U, U')$ by the left-hand trapezoids in the following diagram in $\sfS$:
		\begin{align} \label{diag: def-u}
			\begin{tikzcd}[row sep={17.5mm,between origins}, column sep={15mm,between origins}, ampersand replacement=\&]
				I \ar[rd, "j_{\tilde V}"] \ar[rrr, equal] \ar[dd, "e_\V"'] \&\&\& I \ar[d, dashed, "u"'] \ar[rrr, equal] \&\&\& I \ar[ld, "j_{\tilde U}"'] \ar[dd, "e_\U"] \\
				\& \V(\tilde V, \tilde V) \ar[rr, dashed] \ar[ld, "\eta^\V_{\tilde V}", "\cong"'] \&\& \U(\tilde U, U') \ar[d, "\eta^\U_{U'}"', "\cong"] \&\& \U(\tilde U, \tilde U) \ar[ll, "\U(\tilde U{,} \, u)"']\ar[rd, "\eta^\U_{\tilde U}"', "\cong"] \\
				\tau \tilde V \ar[rrr, "\tau v"] \&\&\& \tau U' \&\&\& \tau \tilde U \ar[lll, "\tau u"']
			\end{tikzcd}
		\end{align}
		Then the right-hand trapezoids commute due to \Cref{rmk: comp-j,rmk: underlying-nat}. The commutative triangles are from \eqref{diag: def-e_U} and \eqref{diag: def-e_V}.
		
		\item \label{thm: BK-hom-cart} Complete the morphism defined by $u$ and $v$ to a distinguished triangle in $\sfM$ as follows:
		\begin{align} \label{diag: total-triangle}
			\begin{tikzcd}[cramped, ampersand replacement=\&]
				\tilde X \ar{r}{\begin{pmatrix} p \\ -q \end{pmatrix}} \& \tilde U \oplus \tilde V \ar{r}{\begin{pmatrix} u & v \end{pmatrix}} \& U'
			\end{tikzcd}
		\end{align}
		It defines a homotopy cartesian square, which fits into a morphism
		\begin{align} \label{diag: hom-cart}
			\begin{tikzcd}[sep={17.5mm,between origins}, ampersand replacement=\&]
				U'' \ar[r, dashed] \ar[d, equal] \& \tilde X \ar[r, "q"] \ar[d, "p"] \ar[rd, phantom, "\square"] \& \tilde V \ar[d, "v"] \\
				U'' \ar[r] \& \tilde U \ar[r, "u"] \& U'
			\end{tikzcd}
		\end{align}
		of distinguished triangles in $\sfM$, where $U'' \in \U$, see the dual of {\cite[Lem.~1.4.4]{Nee01}}. In the sequel, we establish an $\sfS$-natural isomorphism $\M(\tilde X, -) \cong \tau$.
		
		\item \label{thm: BK-M-v-iso} The restricted $\sfS$-natural transformation, see \Cref{thm: Yoneda},
		\[\M(v, -)\vert_{\U} \colon \U(U', -) \xrightarrow{\cong} \M(\tilde V, -) \vert_{\U}\]
		is an isomorphism: For each $U \in \U$, consider the following diagram in $\sfS$: 
		\begin{gather} \label{diag: nat-theta}
			\begin{tikzcd}[row sep={17.5mm,between origins}, column sep={45mm,between origins}, ampersand replacement=\&]
				\M(U', U) \ar[r, "\U(U'{,}\, -)_{U',U}"] \ar[dd, "\M(\tilde V{,}\, -)_{U',U}"'] \& \left[ \M(U', U'), \M(U', U) \right] \ar[r, "{[j_{U'}{,}\, \id]}"] \ar[d, "{[\id{,}\, \theta_U]}"] \& \left[ I, \M(U', U) \right] \ar[dd, "{[\id{,}\, \theta_U]}"] \\
				\& \left[ \M(U', U'), \M(\tilde V, U) \right] \ar[rd, "{[j_{U'}{,}\, \id]}"] \& \\
				\left[ \M(\tilde V, U'), \M(\tilde V, U) \right] \ar[ru, "{[\theta_{U'}{,}\, \id]}"] \ar[rr, "{[v{,}\,\id]}"] \&\& \left[ I, \M(\tilde V, U) \right]
			\end{tikzcd}
		\end{gather}
		The left trapezoid commutes due to the naturality of $\theta$, see \Cref{lem: nat-alt}. Since the internal hom $[-,-]$ is a bifunctor, see \Cref{prp: int-hom-bifunctor}, the right trapezoid and the triangle commute, see \eqref{diag: def-v}. Now, consider the following diagram in $\sfS$:
		\begin{center}
			\begin{tikzcd}[row sep={12.5mm,between origins}, column sep={20mm,between origins}]
				\M(U', U) \ar[rrdd, "r^{-1}"'] \ar[dddd, equal] \ar[rrrr, equal] &&&& \M(U', U) \ar[dddd, "\theta_U"] \\
				&&& \M(U', U) \otimes \M(U', U') \ar[ru, "M"] \\
				&&\M(U', U) \otimes I \ar[ru, "\id \otimes j_{U'}"'] \ar[rd, "\id \otimes v"]\\
				&&& \M(U', U) \otimes \M(\tilde V, U') \ar[rd, "M"'] \\
				\M(U', U) \ar[rruu, "r^{-1}"] \ar[rrrr,"\M(v{,}\, U)"] &&&& \M(\tilde V, U)
			\end{tikzcd}
		\end{center}
		The commutative triangle on the right hand side is obtained by applying the adjunction isomorphism \eqref{eqn: adjunction} to the two outer composites in \eqref{diag: nat-theta}, see \eqref{eqn: rule-binat} and \eqref{eqn: rule-mid-arg}. The upper triangle commutes due to an axiom, the lower one by \Cref{prp: Hom-underlying}. This shows that $\M(v, U)=\theta_U$ is an isomorphism in $\sfS$, see \ref{thm: BK-start}. The claim follows with \Cref{lem: iso-components}.
		
		\item \label{thm: BK-M-p-q-isos} The restricted $\sfS$-natural transformations, see \Cref{thm: Yoneda},
		\begin{align}
			\M(p, -)\vert_{\U} \colon \U(\tilde U, -) \xrightarrow{\cong} \M(\tilde X, -)\vert_{\U} \hspace{2.5mm} \textup{ and } \hspace{2.5mm}
			\M(q, -)\vert_{\V}\colon \V(\tilde V, -) \xrightarrow{\cong} \M(\tilde X, -)\vert_{\V}
		\end{align}
		 are isomorphisms: For $\M(q, -)\vert_{\V}$, this is immediate from \Cref{rmk: Hom-nat-iso} applied to the upper triangle in \eqref{diag: hom-cart}. For $\M(p, -)\vert_{\U}$, it follows if, for any $U \in \U$, the component $\M(p, U)$ is an isomorphism in $\sfS$, see \Cref{lem: iso-components}. Consider the morphism
		\begin{center}
			\begin{tikzcd}[row sep={17.5mm,between origins}, column sep={35mm,between origins}]
				\M(U', U) \ar[r, "\M(u{,}\, U)"] \ar[d, "\M(v{,}\,U)"'] & \M(\tilde U, U) \ar[r] \ar[d, "\M(p{,}\,U)"'] & \M(U'', U) \ar[d, equal] \\
				\M(\tilde V, U) \ar[r, "\M(q{,}\, U)"] & \M(\tilde X, U) \ar[r] & \M(U'', U)
			\end{tikzcd}
		\end{center}
		of distinguished triangles in $\sfS$, obtained by applying $\M(-, U)$ to \eqref{diag: hom-cart}. Since $\M(v{,}\,U)$ is an isomorphism by \ref{thm: BK-M-v-iso}, then so is $\M(p, U)$.

		\item \label{thm: BK-res} Combining \ref{thm: BK-start} and \ref{thm: BK-M-p-q-isos}, we obtain the following $\sfS$-natural isomorphisms:
		\begin{align} \label{diag: rho-U}
			\begin{tikzcd}[sep={17.5mm,between origins}, ampersand replacement=\&]
				\M(\tilde X, -)\vert_\U \ar[rr, dashed, "\rho^\U", "\cong"'] \&\& \tau\vert_\U \\
				\& \U(\tilde U, -) \ar[ru, "\eta^\U"', "\cong"] \ar[lu, "\M(p{,}\, -)\vert_{\U}", "\cong"']
			\end{tikzcd}
		\end{align}
		\begin{align} \label{diag: rho-V}
			\begin{tikzcd}[sep={17.5mm,between origins}, ampersand replacement=\&]
				\M(\tilde X, -)\vert_\V \ar[rr, dashed, "\rho^\V", "\cong"'] \& \& \tau\vert_\V \\
				\& \V(\tilde V, -) \ar[ru, "\eta^\V"', "\cong"] \ar[lu, "\M(q{,}\, -)\vert_{\V}", "\cong"']
			\end{tikzcd}
		\end{align}
		
		\item \label{thm: BK-prep} To prepare for the next step, we establish the equality \[\tau v \circ \rho^\V_{\tilde V} = \rho^\U_{U'} \circ \M(\tilde X, v).\] Consider the following diagram in $\sfS$, using that $\tau u \circ e_\U = \tau v \circ e_\V$, see \eqref{diag: def-u}:
		\begin{gather} \label{diag: prep-help}
			\begin{tikzcd}[row sep={17.5mm,between origins}, column sep={14mm,between origins}, ampersand replacement=\&]
				\M(\tilde V, \tilde V) \ar[rd, "\tau_{\tilde V, \tilde V}"] \ar[rrr, "\M(\tilde V{,}\, v)"] \ar[dd, "\eta^\V_{\tilde V}"'] \&\&\& \M(\tilde V, U') \ar[d, "\tau_{\tilde V, U'}"'] \&\& \M(U', U') \ar[d, "\tau_{U', U'}"] \ar[ll, "\M(v{,}\, U')"'] \ar[rrr, "\M(u{,}\, U')"] \&\&\& \M(\tilde U, U') \ar[ld, "\tau_{\tilde U, U'}"'] \ar[dd, "\eta^\U_{U'}"] \\
				\&\left[\tau \tilde V, \tau \tilde V \right] \ar[ld, "-(e_\V)"] \ar[rr, "{[} \tau \tilde V {,}\, \tau v {]}"] \&\& \left[\tau \tilde V, \tau U' \right] \ar[d, "-(e_\V)"'] \&\& \left[\tau U', \tau U' \right] \ar[d, "-(\tau u \circ e_\U)", "-(\tau v \circ e_\V)"'] \ar[ll, "{[} \tau v {,}\, \tau U' {]}"'] \ar[rr, "{[} \tau u {,}\, \tau U' {]}"] \&\& \left[\tau \tilde U, \tau U' \right] \ar[rd, "-(e_\U)"'] \\
				\tau \tilde V \ar[rrr, "\tau v"] \&\&\& \tau U' \ar[rr, equal] \&\& \tau U' \ar[rrr, equal] \&\&\& \tau U'
			\end{tikzcd}
		\end{gather}
		The square and the two trapezoids in the upper row commute due to \Cref{lem: functor-natural}. In the lower row, the left trapezoid commutes by \Cref{rmk: evaluation}.\ref{rmk: evaluation-nat}, the square and the right trapezoid by \Cref{rmk: evaluation}.\ref{rmk: evaluation-comp}. The commutative triangles are from \eqref{diag: def-e_U} and \eqref{diag: def-e_V}. Using \ref{thm: BK-M-v-iso}, the contour of \eqref{diag: prep-help} then becomes the commutative jagged shape in the following diagram in $\sfS$:
		\begin{gather} \label{diag: prep-comm}
			\begin{tikzcd}[sep={17.5mm,between origins}, ampersand replacement=\&]
				\M(\tilde X, \tilde V) \ar[rrrr, "\M(\tilde X{,}\, v)"] \ar[ddd, "\rho^\V_{\tilde V}"]\&\&\&\& \M(\tilde X, U') \ar[rrr, equal] \&\&\& \M(\tilde X, U') \ar[ddd, "\rho^\U_{U'}"] \\
				\& \M(\tilde V, \tilde V) \ar[lu, "\M(q{,}\, \tilde V)"', "\cong"] \ar[rrdd, "\eta^\V_{\tilde V}"] \ar[rr, "\M(\tilde V{,}\, v)"] \&\& \M(\tilde V, U') \ar[ru, "\M(q{,}\, U')"] \&\& \M(\tilde U, U') \ar[lu, "\M(p{,}\, U')"', "\cong"] \ar[rrdd, "\eta^\U_{U'}"] \\
				\&\&\&\& \M(U', U') \ar[lu, <-, "\M(v{,}\, U')^{-1}", "\cong"'] \ar[ru, "\M(u{,}\, U')"'] \\
				\tau \tilde V \ar[rrr, equal] \&\&\& \tau \tilde V \ar[rrrr, "\tau v"] \&\&\&\& \tau U'
			\end{tikzcd}
		\end{gather}
		In addition, the two large triangles commute due to \eqref{diag: rho-U} and \eqref{diag: rho-V}. The commutative square in the middle is obtained by applying $\M(-, U')$ to the homotopy cartesian square from \eqref{diag: hom-cart} and inverting $\M(v, U')$. The trapezoid commutes since $\M(-,-)$ is a bifunctor, see \Cref{prp: Hom-underlying}. The claimed equality then follows from the resulting commutativity of \eqref{diag: prep-comm}.

		\item To extend $\rho^\U \colon \M(\tilde X, -) \vert_{\U} \to \tau\vert_{\U}$ and $\rho^\V \colon \M(\tilde X, -) \vert_{\V} \to \tau\vert_{\V}$ from \ref{thm: BK-res} to an $\sfS$-natural transformation $\M(\tilde X, -) \to \tau$ on all of $\M$, we apply the triangulated functors $\M(\tilde X, -)$ and $\tau$ to the distinguished triangle \eqref{diag: total-triangle} in $\sfM$. Using the naturality of $\rho^\U$, see \Cref{rmk: underlying-nat}, and the equality from \ref{thm: BK-prep}, we obtain an isomorphism
		\begin{gather} \label{diag: def-eta-comp}
			\begin{tikzcd}[row sep={20mm,between origins}, column sep={50mm,between origins}, ampersand replacement=\&]
				\M(\tilde X, \tilde X) \ar{r}{\begin{pmatrix} \M(\tilde X, p) \\ -\M(\tilde X, q) \end{pmatrix}} \ar[d, dashed, "\cong", "\eta_{\tilde X}"'] \& \M(\tilde X, \tilde U) \oplus \M(\tilde X, \tilde V) \ar{r}{\begin{pmatrix} \M(\tilde X, u) & \M(\tilde X, v) \end{pmatrix}} \ar[d, color=white, "\textcolor{black}{\cong}"] \& \M(\tilde X, U') \ar[d, "\rho^\U_{U'}"', "\cong"] \\
				\tau\tilde X \ar{r}{\begin{pmatrix} \tau p \\ -\tau q \end{pmatrix}} \&  \tau \tilde U \oplus \tau \tilde V\ar[<-]{u}{\begin{pmatrix}
						\rho^\U_{\tilde U} & 0 \\ 0 & \rho^\V_{\tilde V}
				\end{pmatrix}} \ar{r}{\begin{pmatrix} \tau u & \tau v \end{pmatrix}} \& \tau U'
			\end{tikzcd}
		\end{gather}
		of distinguished triangles in $\sfS$. By \Cref{thm: Yoneda}, $e := \eta_{\tilde X} \circ j_{\tilde X} \in \sfS(I, \tau \tilde X)$ corresponds to an $\sfS$-natural transformation $\eta =(\eta_U)_{U \in \M} \colon \M(\tilde X, -) \to \tau$ via the following commutative diagrams in $\sfS$:
		
		\begin{gather} \label{diag: def-eta}
			\begin{tikzcd}[sep={17.5mm,between origins}, ampersand replacement=\&]
				I \ar[rr, dashed, "e"] \ar[rd, "j_{\tilde X}"'] \&\& \tau \tilde X \\
				\& \M(\tilde X, \tilde X) \ar[ru, "\eta_{\tilde X}"']
			\end{tikzcd}
			\hspace{1cm}
			\begin{tikzcd}[sep={17.5mm,between origins}, ampersand replacement=\&]
			\M(\tilde X, U) \ar[rr, dashed, "\eta_U"] \ar[rd, "\tau_{\tilde X, U}"'] \&\& \tau U \\
				\& \left[\tau \tilde X, \tau U \right] \ar[ru, "-(e)"']
			\end{tikzcd}
		\end{gather}
		
		\item \label{thm: BK-res-iso} The restrictions $\eta\vert_\U$ and $\eta\vert_\V$ are isomorphisms: For $U \in \U$, consider the following diagram in $\sfS$:
		\begin{center}
			\begin{tikzcd}[row sep={17.5mm,between origins}, column sep={25mm,between origins}]
				I \ar[rd, "j_{\tilde X}"] \ar[rrr, "j_{\tilde U}"] \ar[dd, "e"'] &&& \M(\tilde U, \tilde U) \ar[ld, "\M(p{,}\, \tilde U)"'] \ar[dd, "\eta^\U_{\tilde U}"] \\
				& \M(\tilde X, \tilde X) \ar[r, "\M(\tilde X{,}\, p)"] \ar[ld, "\eta_{\tilde X}"'] & \M(\tilde X, \tilde U) \ar[rd, "\rho^\U_{\tilde U}"] \\
				\tau \tilde X \ar[rrr, "\tau p"] &&& \tau \tilde U
			\end{tikzcd}
		\end{center}
		The trapezoids commute by \Cref{lem: j-extranat} and \eqref{diag: def-eta-comp}, the triangles by \eqref{diag: def-eta} and \eqref{diag: rho-U}.
		Using \eqref{diag: def-e_U}, it follows that \[\tau p \circ e = \eta^\U_{\tilde U} \circ j_{\tilde U} = e_\U.\]
		Then \Cref{rmk: evaluation}.\ref{rmk: evaluation-comp} yields the commutative lower trapezoid in the following diagram in $\sfS$:
		\begin{center}
			\begin{tikzcd}[row sep={17.5mm,between origins}, column sep={25mm,between origins}]
				\M(\tilde U, U) \ar[rrr, "\M(p{,}\,U)", "\cong"'] \ar[rd, "\tau_{\tilde U, U}"] \ar[dd, "\eta^\U_U"', "\cong"] &&& \M(\tilde X, U) \ar[ld, "\tau_{\tilde X, U}"] \ar[dd, "\eta_U"]\\
				&\left[ \tau \tilde U, \tau U \right] \ar[r, "{[}\tau p{,}\, \tau U{]}"] \ar[ld, "-(e_\U)"'] & \left[ \tau \tilde X, \tau U \right] \ar[rd, "-(e)"]\\
				\tau U \ar[rrr, equal] &&& \tau U
			\end{tikzcd}
		\end{center}
		The upper trapezoid commutes due to \Cref{lem: functor-natural}, the triangles by \eqref{diag: def-e_U} and \eqref{diag: def-eta}. It follows that \[\eta \vert_\U = \eta^\U \circ \left(\M(p, -)\vert_\U\right)^{-1} = \rho^\U\] is an isomorphism, see \eqref{diag: rho-U} and \ref{thm: BK-res}. The proof for $\eta \vert_\V = \rho^\V$ is analogous.
		
		\item It remains to see that $\eta_X$ is an isomorphism in $\sfS$ for any $X \in \T$, see \Cref{lem: iso-components}. Consider a distinguished triangle \begin{tikzcd}[cramped, sep=small] U \ar[r] & X \ar[r] & V \end{tikzcd} in $\sfM$, where $U \in \U$ and $V \in \V$. By the naturality of $\eta$, applying the triangulated functors $\M(\tilde X, -)$ and $\tau$ yields a morphism of distinguished triangles
		\begin{center}
			\begin{tikzcd}[row sep={17.5mm,between origins}, column sep={25mm,between origins}]
				\M(\tilde X, U) \ar[r] \ar[d, "\eta_U"', "\cong"] & \M(\tilde X, X) \ar[r] \ar[d, "\eta_X"'] & \M(\tilde X, V) \ar[d, "\eta_V"', "\cong"] \\
				\tau U \ar[r] & \tau X \ar[r] & \tau V
			\end{tikzcd}
		\end{center}
		in $\sfS$ with isomorphisms as indicated due to \ref{thm: BK-res-iso}. Then $\eta_X$ is an isomorphism in $\sfS$. \qedhere
	\end{enumerate}
\end{proof}

With an additional hypothesis, a modification of the proof of \Cref{thm: BK} yields our main result:

\begin{thm} \label{thm: BK-alt-hyp}
	Let $\sfS$ be a symmetric closed additive monoidal and triangulated category. Consider an $\sfS$-triangulated category $\M$ with semiorthogonal decompositions $(\W, \U)$ and $(\U, \V)$. Then a triangulated $\sfS$-functor $\tau \colon \M \to \cS$ is $\sfS$-representable if $\tau \vert_{\U}$ and $\tau \vert_{\V}$ are so.
\end{thm}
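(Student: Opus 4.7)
The plan is to adapt the proof of \Cref{thm: BK}, identifying the single place where its stronger hypothesis was used and replacing it with the new assumption that $(\W, \U)$ is a semiorthogonal decomposition. In step~(1) of the proof of \Cref{thm: BK}, three pieces of data are extracted: representations of $\tau\vert_\U$ by some $\tilde U \in \U$, of $\tau\vert_\V$ by some $\tilde V \in \V$, and of $\M(\tilde V, -)\vert_\U$ by some $U' \in \U$ together with an $\sfS$-natural isomorphism $\theta$. The first two pieces are provided directly by our current hypothesis that $\tau\vert_\U$ and $\tau\vert_\V$ are $\sfS$-representable. Only $(U', \theta)$ required the blanket representability assumption in \Cref{thm: BK}.

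To supply this third piece, I would apply the semiorthogonal decomposition $(\W, \U)$ to the object $\tilde V \in \M$, obtaining a distinguished triangle $W \to \tilde V \xrightarrow{v} U'$ in $\sfM$ with $W \in \W$ and $U' \in \U$. Since $\M(\W, \U) = 0$ by semiorthogonality, \Cref{rmk: Hom-nat-iso} yields an $\sfS$-natural isomorphism
\[\M(v, -)\vert_\U \colon \U(U', -) \xrightarrow{\cong} \M(\tilde V, -)\vert_\U,\]
which plays the role of $\theta$ in the original proof. By \Cref{thm: Yoneda}, this isomorphism corresponds precisely to the morphism $v$ already given by the triangle, so the notation is consistent with that of \Cref{thm: BK} and diagram (\ref{diag: def-v}) is automatically in force with $\theta_{U'} \circ j_{U'} = v$.

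Having substituted in this $v$ and the corresponding $U'$, I would then run steps (2)--(10) of the proof of \Cref{thm: BK} verbatim. As a convenient simplification, step~(4) of that proof---establishing that $\M(v, -)\vert_\U$ is an isomorphism---is now automatic from the construction, so it may be omitted rather than reproved. All subsequent diagrams commute for the same formal reasons, since they depend only on the formal properties of $v$, $u$, $\tilde U$, $\tilde V$, and $U'$, and not on how $v$ was obtained.

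The main obstacle is thus one of recognition rather than of genuine technical difficulty: noticing that \Cref{rmk: Hom-nat-iso}, applied to the decomposition $(\W, \U)$ at the specific object $\tilde V \in \V$, furnishes precisely the missing representability of $\M(\tilde V, -)\vert_\U$ that was invoked in \Cref{thm: BK}. Once this observation is in place, no work beyond the bookkeeping of the original proof is required.
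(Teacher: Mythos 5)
Your proposal is correct and takes essentially the same route as the paper: apply the semiorthogonal decomposition $(\W,\U)$ to $\tilde V$ to produce a distinguished triangle $W \to \tilde V \xrightarrow{v} U'$, and invoke \Cref{rmk: Hom-nat-iso} to obtain $\theta := \M(v,-)\vert_\U$ as the needed $\sfS$-natural isomorphism, after which the proof of \Cref{thm: BK} runs unchanged. Your additional observation that step \ref{thm: BK-M-v-iso} becomes redundant matches the paper's own follow-up remark.
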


\begin{proof}
	The existence of $\theta$ in step \ref{thm: BK-start} of the proof of \Cref{thm: BK} can be deduced from the additional semiorthogonal decomposition $(\W, \U)$: Any distinguished triangle $\begin{tikzcd}[cramped, sep=small] W \ar[r] & \tilde V \ar[r, "v"] & U' \end{tikzcd}$ with $W \in \W$ and $U' \in \U$ serves to define $v$. Then \[\theta := \M(v, -) \vert_{\U}\colon \U(U', -) \to \M(\tilde V, -)\vert_\U\] is an $\sfS$-natural isomorphism due to \Cref{rmk: Hom-nat-iso}.
\end{proof}

\begin{rmk}
	Under the assumptions of \Cref{thm: BK-alt-hyp}, $\theta$ can be eliminated entirely from the proof of \Cref{thm: BK}: In step \ref{thm: BK-def-u}, $v$ from the proof of \Cref{thm: BK-alt-hyp} is used to define $u$. The argument from step \ref{thm: BK-hom-cart} then applies twice to extend the diagram \eqref{diag: hom-cart} symmetrically:
	\begin{align}
		\begin{tikzcd}[sep={17.5mm,between origins}, ampersand replacement=\&]
			\& W \ar[r, equal] \ar[d, dashed] \& W \ar[d] \\
			U'' \ar[r] \ar[d, equal] \& \tilde X \ar[r, "q"] \ar[d, "p"] \ar[rd, phantom, "\square"] \& \tilde V \ar[d, "v"] \\
			U'' \ar[r] \& \tilde U \ar[r, "u"] \& U'
		\end{tikzcd}
	\end{align}
	In step \ref{thm: BK-M-p-q-isos}, the argument for $\M(q, -)\vert_{\V}$ now also covers $\M(p, -)\vert_{\U}$, making step \ref{thm: BK-M-v-iso} obsolete. From step \ref{thm: BK-res} onward, the proof remains unchanged.
\end{rmk}


\printbibliography

\end{document}